\def\H{{\mathbb H}}
\def\R{{\mathbb R}}
\def\N{{\mathbb N}}
\def\Sph{{\mathbb S}} 
\def\F{\mathcal F}
\def\O{\mathcal O}
\def\L{\mathcal L}
\def\virgp{\raise 2pt\hbox{,}}
\def\({\left(}
\def\){\right)}
\def\<{\left\langle}
\def\>{\right\rangle}
\def\le{\leqslant}
\def\ge{\geqslant}
\def\Eq#1#2{\mathop{\sim}\limits_{#1\rightarrow#2}}
\def\Tend#1#2{\mathop{\longrightarrow}\limits_{#1\rightarrow#2}}
\def\d{{\partial}}
\def\eps{\varepsilon}
\def\si{{\sigma}}
\DeclareMathOperator{\RE}{Re}
\theoremstyle{plain}
\newtheorem{theorem}{Theorem}[section]
\newtheorem{lemma}[theorem]{Lemma}
\newtheorem{corollary}[theorem]{Corollary}
\newtheorem{proposition}[theorem]{Proposition}
\theoremstyle{definition}
\newtheorem{notation}[theorem]{Notation}
\newtheorem{remark}[theorem]{Remark}
\newtheorem*{remark*}{Remark}
\numberwithin{equation}{section}
\begin{document}

\title[Scattering for NLS in different geometries]{On scattering for
  NLS: from Euclidean to hyperbolic space}   
\author[V. Banica]{Valeria Banica}
\address[V. Banica]{D\'epartement de Math\'ematiques\\ Universit\'e
  d'Evry\\ Bd. F.~Mitterrand\\ 91025 Evry\\ France} 
\email{Valeria.Banica@univ-evry.fr}
\author[R. Carles]{R\'emi Carles}
\address[R. Carles]{CNRS \& Universit\'e Montpellier~2\\Math\'ematiques
\\CC~051\\Place Eug\`ene Bataillon\\34095
  Montpellier cedex 5\\ France}
\email{Remi.Carles@math.cnrs.fr}
\author[T. Duyckaerts]{Thomas Duyckaerts}
\address[T. Duyckaerts]{D\'epartement de Math{\'e}matiques\\ Universit{\'e} de Cergy-Pontoise\\CNRS UMR 8088\\
2 avenue Adolphe Chauvin\\ BP 222, Pontoise\\ 95302 Cergy-Pontoise
cedex\\ France}
\email{tduyckae@math.u-cergy.fr}
\begin{abstract}
We prove asymptotic completeness in the energy space for the
nonlinear Schr\"odinger equation posed on hyperbolic space $\H^n$ in
the radial case, for
$n\ge 4$, and any energy-subcritical, defocusing, power
nonlinearity. The proof is based on simple 
Morawetz estimates and weighted Strichartz estimates. We investigate
the same question on spaces which kind of 
interpolate between Euclidean space and hyperbolic space, showing
that the family of short range nonlinearities becomes larger and
larger as the space approaches the hyperbolic space. Finally, we
describe the large time behavior of radial solutions to the free
dynamics. 
\end{abstract}
\subjclass[2000]{35P25; 35Q55; 58J50}
\thanks{V.B. is partially
  supported by the ANR project ``\'Etude qualitative des E.D.P.'',
R.C., by the ANR project SCASEN, and  
T.D., by the ANR project ONDNONLIN}
\maketitle


\section{Introduction}
Consider the defocusing nonlinear Schr\"odinger equation on Euclidean
space
\begin{equation}
  \label{eq:nlsEucl}
i\d_t u + \Delta u= |u|^{2\sigma}u,\quad x\in\R^n, n\ge 3\quad ;\quad
u_{\mid t=0}=u_0\in H^1(\R^n),
\end{equation}
where $\Delta$ stands for the usual Laplacian.  For
$0<\si\le 2/(n-2)$, the solution to \eqref{eq:nlsEucl} is global in
time, in the class of finite energy solutions
\cite{GV79Cauchy,CKSTTAnnals,RV,VisanH1}. If in addition $\si>2/n$,
then there is scattering in $H^1$ \cite{GV85,CKSTTAnnals,RV,VisanH1}:
\begin{equation*}
  \exists u_\pm \in H^1(\R^n),\quad \left\lVert
  u(t)-e^{it\Delta}u_\pm\right\rVert_{H^1}\Tend t {\pm \infty}0 . 
\end{equation*}
On the other hand, if $\si$ is too small, then long range effects are
present, and the above result holds only in the trivial case
\cite{Barab,Strauss74}: if $\si\le 1/n$ and $u_+\in L^2(\R^n)$, $u\in
C(\R;L^2(\R^n))$ are such that
\begin{equation*}
  \left\lVert
  u(t)-e^{it\Delta}u_+\right\rVert_{L^2}\Tend t {+ \infty}0 ,
\end{equation*}
then necessarily $u_+=u=0$ (even if the functions are supposed to be
radial). In other words, linear and nonlinear 
dynamics are not comparable for large time if $\si\le 1/n$. In this
paper, we show that this phenomenon disappears for radial solutions,
when the space variable 
belongs to the hyperbolic space instead of the Euclidean space. Such a
phenomenon was established in \cite{BCS} in the three-dimensional
case, with partial results in other dimensions. We prove asymptotic
completeness in the case of higher dimensions. Moreover, we consider
rotationally symmetric manifolds, which may be viewed as
interpolations between 
Euclidean and hyperbolic spaces, as introduced in \cite{BD}. We show
that asymptotic completeness holds for radial solutions and
$\si_0(n)<\si$, for some explicit value $\si_0(n)$, going to zero as
the space approaches 
the hyperbolic space. The proof relies on simple Morawetz estimates
(as opposed to interaction Morawetz estimates, as introduced in
\cite{CKSTTCPAM}), and weighted Strichartz estimates
\cite{VittoriaDR,BD}.  
The energy-critical case is not
considered: we always assume $\si<2/(n-2)$. 

We begin with the nonlinear
Schr\"odinger equation on hyperbolic space 
\begin{equation}
  \label{nls}
i\d_t u + \Delta_{\mathbb{H}^n} u= |u|^{2\sigma}u,\quad x\in\H^n\quad ;\quad
u_{\mid t=0}=u_0\in H^1(\H^n),
\end{equation}
where $x=(\cosh r,\omega \sinh r)\in\H^n\subset \R^{n+1}$, $r\ge 0$,
$\omega \in\Sph^{n-1}$ and 
$$\Delta_{\H^n}=\partial^2_r+(n-1)\frac{\cosh r}{\sinh
  r}\partial_r+\frac{1}{\sinh^2 r}\Delta_{\mathbb{S}^{n-1}}.$$ 
In \cite{BCS} it has been proved that for small radial initial data,
there is asymptotic completeness in $L^2$ for all
$0<\sigma<2/n$ and $n\ge 2$. At the $H^1$ level, wave
operators were proved to exist for all $0<\sigma<2/(n-2)$ and
$n\ge 2$, without restriction on the size of the radial data. The
main ingredient were radial Strichartz estimates similar to those used
on $\R^d$, with arbitrary 
$d\ge n$ (so the assumption $\si>2/d$ on $\R^d$ boils down to $\si>0$,
since $d\ge n$ is arbitrary). Such estimates stem from weighted
Strichartz estimates on 
$\H^n$ in the radial case (see \cite{BaHyper} for $n=3$,
\cite{VittoriaDR} for $n\ge 4$, and \cite{BCS} for $n=2$). Finally,
asymptotic completeness was proved for all $0<\sigma<2/(n-2)$, without
restriction on the size of the radial data, but only in dimension
$n=3$. The latter result used in addition interaction Morawetz
estimates, valid also in the non-radial case, in all dimensions $n\ge
3$. The issue for $n\ge 4$ was that the passage from the interaction
Morawetz estimates to global in time estimates in mixed spaces is done
\emph{via} a delicate Fourier argument on $\R^n$ \cite{TaoVisanZhang},
difficult to adapt to 
hyperbolic space. Moreover, the historical approach based on simple
Morawetz estimates relies on a precise dispersive rate for the free
Schr\"odinger group (see
e.g. \cite{CazCourant}), which is not known on $\H^n$ for $n\ge 4$.
In this paper we cover the cases $n\ge 4$ by using
simple Morawetz estimate and weighted Strichartz estimates.  
We  focus on the radial case.
\begin{remark}
 Quite simultaneously to this work, the existence of scattering
 operators in $H^1(\H^n)$ for $n\ge 2$ and $0<\si<2/(n-2)$ was
 established in \cite{IS08} (see also \cite{AnPi}), without the radial
 symmetry assumption 
 that we make in this paper. The authors have derived new Morawetz
 estimates, which overcome the difficulties pointed out above, thanks
 also to new Strichartz estimates. Our
 point of view in the present paper is 
 rather to insist on the transition between Euclidean to hyperbolic
 geometry, as explained below. Also, the proof of the asymptotic
 completeness in the radial case is naturally shorter, and serves as a
 basis to study the case of intermediary metrics, where in addition no
 Fourier analysis seems to available. 
\end{remark}

\begin{theorem}\label{theo:CA}
Let $n\ge 4$ and 
$$0<\sigma<\frac{2}{n-2}.$$
Then asymptotic completeness holds in $H_{\rm rad}^1(\H^n)$ for
\eqref{nls}: for all $u_0\in H_{\rm rad}^1(\H^n)$, there exists
$u_+\in H_{\rm rad}^1(\H^n)$ such that
\begin{equation*}
\left\lVert
  u(t)-e^{it\Delta_{\H^n}}u_+\right\rVert_{H^1(\H^n)} \Tend t {+\infty}0 ,
\end{equation*}
where $u$ is the solution to \eqref{nls}. 
\end{theorem}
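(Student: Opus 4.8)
The plan is to follow the standard scheme for defocusing, energy-subcritical problems---global bounds, a global spacetime (scattering) norm, convergence of the Duhamel integral---while exploiting the hyperbolic geometry to reach arbitrarily small $\si$. Since $0<\si<2/(n-2)$ and the equation is defocusing, conservation of mass and of the energy $E(u)=\frac12\|\nabla u\|_{L^2}^2+\frac1{2\si+2}\|u\|_{L^{2\si+2}}^{2\si+2}$ yields a global solution with $\sup_t\|u(t)\|_{H^1(\H^n)}<\infty$; here the spectral gap $-\Delta_{\H^n}\ge(n-1)^2/4$ makes $\|\cdot\|_{H^1}$ and $\|\nabla\cdot\|_{L^2}$ comparable. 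It then suffices to produce one global-in-time Strichartz-type norm $\|u\|_{X([0,\infty))}<\infty$ controlling the nonlinearity in a dual Strichartz space: writing $v(t)=e^{-it\Delta_{\H^n}}u(t)=u_0-i\int_0^t e^{-is\Delta_{\H^n}}\big(|u|^{2\si}u\big)(s)\,ds$ via Duhamel's formula, finiteness of $\|u\|_{X}$ forces $\|u\|_{X([T,\infty))}\to0$, whence $v(t)$ is Cauchy in $H^1$ and its limit $u_+$ satisfies $\|u(t)-e^{it\Delta_{\H^n}}u_+\|_{H^1}=\|v(t)-u_+\|_{H^1}\Tend t{+\infty}0$.

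The heart of the matter is a simple Morawetz estimate, obtained with the geodesic distance $r$ to the origin (or a smooth convex variant) as multiplier. For radial data the angular Hessian contribution vanishes, and the virial identity, integrated in time and bounded by the conserved energy, yields
\begin{equation*}
\frac{2\si}{\si+1}\int_0^\infty\!\!\int_{\H^n}(\Delta_{\H^n}r)\,|u|^{2\si+2}\,d\mu\,dt-\frac14\int_0^\infty\!\!\int_{\H^n}(\Delta_{\H^n}^2 r)\,|u|^2\,d\mu\,dt\le C\sup_t\|u(t)\|_{H^1}^2.
\end{equation*}
Here $\Delta_{\H^n}r=(n-1)\coth r\ge n-1$ is bounded \emph{below} uniformly---in sharp contrast to the Euclidean weight $(n-1)/|x|$, which degenerates at infinity---while a direct computation gives $\Delta_{\H^n}^2 r=(n-1)(3-n)\cosh r/\sinh^3 r\le0$ precisely when $n\ge3$, so for $n\ge4$ \emph{both} terms on the left are nonnegative. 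One thus obtains the nondegenerate global bound $\int_0^\infty\|u(t)\|_{L^{2\si+2}(\H^n)}^{2\si+2}\,dt\le C$. This is the mechanism by which the Euclidean long-range obstruction disappears, making every $\si>0$ short range.

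It remains to upgrade this Morawetz bound to a genuine scattering norm, and this is where I expect the main difficulty and where $n\ge4$ re-enters. Since no pointwise dispersive rate for $e^{it\Delta_{\H^n}}$ is available in this range, I would rely on the radial weighted Strichartz estimates of \cite{VittoriaDR,BD}, which for radial data on $\H^n$ behave like Euclidean Strichartz estimates on $\R^d$ for arbitrary $d\ge n$; this effectively enlarges the set of admissible exponents and is exactly what renders $\si>0$ short range. The scheme is then: interpolate the uniform energy bound $u\in L^\infty_t H^1$ with the Morawetz spacetime bound to place $u$ in an intermediate spacetime Lebesgue space; partition $[0,\infty)$ into finitely many intervals on which this norm is small; and on each interval run a Strichartz bootstrap---using the fractional Leibniz rule to treat the non-integer power $|u|^{2\si}u$ at the $H^1$ level---to close the estimate for $\|u\|_{X}$. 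Summing over the finitely many intervals gives $\|u\|_{X([0,\infty))}<\infty$, and the reduction in the first paragraph concludes the proof. The delicate points are the simultaneous control of $u$ and $\nabla u$ in the weighted spaces, and the verification that the admissible range furnished by the $\R^d$-type estimates genuinely covers all $0<\si<2/(n-2)$ once $n\ge4$.
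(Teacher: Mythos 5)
Your architecture matches the paper's proof almost step for step---global $H^1$ bounds, a Morawetz estimate from the multiplier $a(x)=r$, the weighted radial Strichartz estimates of \cite{VittoriaDR,BD}, a partition of $[0,\infty)$ into finitely many intervals on which the Morawetz quantity is small, a bootstrap (the paper invokes Lemma~\ref{lem:boot}) closing a global $X$-norm, and finally the Cauchy argument for $e^{-it\Delta_{\H^n}}u(t)$ in $H^1$---but with one genuine variation worth recording. From the virial identity the paper \emph{discards} the nonlinear term and keeps the bilinear one, $\int_0^T\!\!\int \frac{\cosh r}{\sinh^3 r}|u|^2$, whose usefulness rests on $-\Delta^2 r=(n-1)(n-3)\cosh r/\sinh^3 r>0$, i.e.\ on $n\ge 4$ (Lemma~\ref{simpleM}); you instead keep the nonlinear term and use $\Delta_{\H^n} r=(n-1)\coth r\ge n-1$ to obtain the nondegenerate bound $\int_0^\infty\!\!\int_{\H^n}|u|^{2\si+2}\,d\mu\,dt<\infty$ (this is precisely the mechanism of \cite{IS08}, and it already works for $n\ge 3$, since one only needs $-\Delta^2 r\ge 0$ to drop the other term). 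Both quantities are $L^1$ in spacetime and either can play the role of the small quantity $f$ in the interval decomposition: where the paper's Lemma~\ref{lem:algebre} pulls out $\bigl(\sqrt{\cosh r/\sinh^3 r}\,|u|\bigr)^\alpha$ in $L^{2/\alpha}_{t,x}$, your scheme would pull out $|u|^\alpha$ in $L^{(2\si+2)/\alpha}_{t,x}$, and the H\"older/admissibility bookkeeping is then virtually identical: at $\alpha=0$ both reduce to $\frac1\theta=\frac2n-\frac{n-2}{n}\si$ for the residual weight ${\tt w}_n^{1-2/q'}{\tt w}_n^{-2/n}$, which is finite iff $\si<\frac{2}{n-2}$ and meets the exponential-integrability constraint iff $\si>0$, and openness furnishes $\alpha>0$. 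So your variant buys a more robust Morawetz step (the restriction $n\ge 4$ then enters only through the Strichartz estimates of \cite{VittoriaDR}), at no extra cost.

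The one real lacuna is that you assert rather than prove the central estimate---the analogue of Lemma~\ref{lem:algebre}, bounding the dual weighted Strichartz norm of $A\bigl(|u|^{2\si}u\bigr)$ by a small power of the Morawetz quantity times a power of $\|u\|_{X(I)}$---and you flag it yourself as ``the delicate points.'' This is where the theorem is actually won: one must fix the admissible pair via $\frac1q=\frac1{2^*}+c\,\alpha$, check $\theta\in[1,\infty)$, and verify the exponential decay of the leftover weight in $L^\theta$; that verification is exactly where ``every $\si>0$ is short range'' is encoded, and as indicated above it does close in your variant, so the gap is one of execution rather than of strategy. A second, smaller point: do not invoke a fractional Leibniz rule---no such tool is available in weighted radial spaces on $\H^n$, and none is needed. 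Since the analysis is at the $H^1$ level with $A\in\{\mathrm{Id},\d_r\}$, the pointwise chain-rule bound $\bigl|A\bigl(|u|^{2\si}u\bigr)\bigr|\lesssim |u|^{2\si}\,|Au|$, valid for every $\si>0$ because $z\mapsto|z|^{2\si}z$ is $C^1$, is what the paper uses and is all your argument requires.
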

In view of \cite{BCS}, the wave operators $W_\pm$ are well-defined on
$H_{\rm rad}^1(\H^n)$ for this range of $\si$. The above result shows that
the wave operators are invertible on $H_{\rm rad}^1(\H^n)$
($u_+=W_+^{-1}u_0$), so we infer the existence
of a scattering operator for arbitrarily large data, with no long
range effect. This extends the 
result of \cite{BCS}, established for $n=3$ only.
\begin{corollary}\label{cor:scatt}
  For $n\ge 3$, and 
  \begin{equation*}
    0<\si<\frac{2}{n-2},
  \end{equation*}
the scattering operator $S=W_+^{-1}W_-$ associated to
  \eqref{nls} is well-defined from $H_{\rm rad}^1(\H^n)$ to
  $H_{\rm rad}^1(\H^n)$: for all $u_-\in H_{\rm rad}^1(\H^n)$, there exists
  $u\in C(\R; H_{\rm rad}^1(\H^n))$ solution to 
  \begin{equation*}
    i\d_t u + \Delta_{\H^n} u = \lvert u\rvert^{2\si}u,
  \end{equation*}
such that
\begin{equation*}
  \left\lVert u(t)- e^{it\Delta_{\H^n}}u_-\right\rVert_{H^1(\H^n)} \Tend
  t{-\infty }0, 
\end{equation*}
and a unique $u_+=Su_- \in H_{\rm rad}^1(\H^n)$ such that
\begin{equation*}
\left\lVert
  u(t)-e^{it\Delta_{\H^n}}u_+\right\rVert_{H^1(\H^n)} \Tend t {+\infty}0 .
\end{equation*}
\end{corollary}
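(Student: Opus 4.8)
The plan is to combine the two halves of the scattering picture: the existence of the wave operators, which is already available from \cite{BCS} in this range of $\si$, and the asymptotic completeness, which is supplied by Theorem \ref{theo:CA} when $n\ge 4$ and by \cite{BCS} when $n=3$. The corollary is then a formal assembly of these ingredients, so I expect no genuine analytic obstacle here; the difficulty is entirely contained in Theorem \ref{theo:CA}.

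First I would fix $u_-\in H^1_{\rm rad}(\H^n)$ and invoke the wave operator $W_-$ of \cite{BCS}: there is a unique global solution $u\in C(\R;H^1_{\rm rad}(\H^n))$ to the equation such that
$$\left\lVert u(t)-e^{it\Delta_{\H^n}}u_-\right\rVert_{H^1(\H^n)}\Tend t{-\infty}0,$$
and I set $u_0=u(0)=W_- u_-$. The solution is global in time because the nonlinearity is defocusing and energy-subcritical, so conservation of mass and energy bounds $u$ uniformly in $H^1$; this places $u$ in the class on which the completeness statement applies. Next I would apply asymptotic completeness to the initial datum $u_0$: for $n\ge 4$ this is exactly Theorem \ref{theo:CA}, and for $n=3$ it is the corresponding result of \cite{BCS}. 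In either case there exists $u_+\in H^1_{\rm rad}(\H^n)$ with
$$\left\lVert u(t)-e^{it\Delta_{\H^n}}u_+\right\rVert_{H^1(\H^n)}\Tend t{+\infty}0,$$
which is precisely the statement $u_0=W_+ u_+$. Since $u_-$ was arbitrary, this shows that $W_+$ maps onto $H^1_{\rm rad}(\H^n)$.

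It remains to check that $u_+$ is uniquely determined, that is, that $W_+$ is injective, so that $W_+^{-1}$ and hence $S=W_+^{-1}W_-$ are well-defined. If two states $u_+^{(1)}$ and $u_+^{(2)}$ both satisfy the convergence above for the same solution $u$, then the triangle inequality gives $\left\lVert e^{it\Delta_{\H^n}}(u_+^{(1)}-u_+^{(2)})\right\rVert_{H^1(\H^n)}\Tend t{+\infty}0$; but $e^{it\Delta_{\H^n}}$ is an isometry on $H^1(\H^n)$, so this norm is independent of $t$, forcing $u_+^{(1)}=u_+^{(2)}$. Injectivity of $W_+$ at the level of the initial data is likewise inherited from uniqueness for the Cauchy problem. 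I would then define $Su_-:=u_+=W_+^{-1}W_- u_-$, which yields the scattering operator with all the asserted convergence properties. The only point requiring care is the interface between the two inputs—namely that the solution emanating from $u_0=W_-u_-$ is the same solution to which completeness is applied—which is guaranteed by uniqueness of the finite-energy flow.
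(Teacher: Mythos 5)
Your proposal is correct and follows exactly the paper's route: the paper, too, deduces the corollary by combining the existence of wave operators from \cite{BCS} with asymptotic completeness from Theorem~\ref{theo:CA} for $n\ge 4$ (and from \cite{BCS} for $n=3$), the uniqueness of $u_+$ being the standard consequence of $e^{it\Delta_{\H^n}}$ acting isometrically on $H^1(\H^n)$. Your write-up merely makes explicit the glueing via $u_0=W_-u_-$ and uniqueness of the $H^1$ flow, which the paper leaves implicit.
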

The absence of long range effects is of course an effect of the
geometry of the hyperbolic space. Typically, the usual algebraic decay
on $\R^n$ is replaced by an exponential decay. This vague statement
can be compared to the phenomenon studied in \cite{CaSIMA}, where
instead of changing the geometry of the space, an external potential
was added:
\begin{equation*}
  i\d_t u +\Delta u = -\lvert x\rvert^2 u + \lvert
  u\rvert^{2\si}u,\quad x\in\R^n\quad ;\quad u_{\mid t=0}=u_0\in
  \Sigma=H^1\cap \F(\H^1).
\end{equation*}
The effect of this repulsive harmonic potential (as opposed to the
usual harmonic potential $+\lvert x\rvert^2$) is to accelerate the
particle which goes to infinity exponentially fast, so that asymptotic
completeness holds in $\Sigma$ for any $0<\si<2/(n-2)$ (no long range
effect). In \cite{BCHM} \emph{linear} scattering theory was considered
for perturbations of the Hamiltonian $-\Delta-\lvert
x\rvert^{\alpha}$, for $0<\alpha\le 2$. It is shown that the
borderline between short range and long range moves as $\alpha$ varies
from $0$ to $2$. Essentially, a potential $V$ is short range as soon
as $|V(x)|\lesssim \<x \>^{-1+\alpha/2-\eps}$ when $\alpha <2$, and 
$|V(x)|\lesssim (1+\log\<x \>)^{-1-\eps}$ when $\alpha=2$, for some
$\eps>0$; the dynamics generated by $-\Delta-\lvert 
x\rvert^{\alpha}$ accelerates the particles, from an algebraic decay
with a larger and larger power, to the limiting exponential case (if
$\alpha>2$, the underlying operator is not even essentially
self-adjoint on $C_0^\infty(\R^n)$, due to infinite speed of
propagation, see e.g. \cite{Dunford}). Note
that nonlinear perturbations of $-\Delta-\lvert
x\rvert^{\alpha}$ for $0<\alpha<2$ have not been studied, due to a
lack of suitable technical tools. In the present paper, we analyze
what can be considered as the geometrical counterpart of this
problem. 
\begin{notation}
  Let $k\in\N$ and 
  \begin{equation*}
    \phi(r)=\sum_{j=0}^k \frac{1}{(2j+1)!}r^{2j+1} . 
  \end{equation*}
We denote by $M_k^n$ (or simply $M$ when there is no possible
confusion) the $n$-dimensional rotationally symmetric
manifold with metric
\begin{equation*}
  ds^2 = dr^2 +\phi(r)^2 d\omega^2,
\end{equation*}
where $d\omega^2$ stands for the metric on $\Sph^{n-1}$. 
\end{notation}
The Laplace--Beltrami operator on $M_k^n$ is 
\begin{equation*}
  \Delta_M= \d_r^2 +(n-1)\frac{\phi'(r)}{\phi(r)}\d_r
  +\frac{1}{\phi(r)^2}\Delta_{\Sph^{n-1}}. 
\end{equation*}
\begin{remark}
  If $k=0$, we recover the Euclidean case. The hyperbolic
  case corresponds to $k=\infty$. The manifold $M_k^n$ can thus be
  viewed as an interpolation between these two cases. 
\end{remark}
\begin{theorem}\label{theo:scattM}
  Let $n\ge 4$. For $k\in \N$, consider the nonlinear Schr\"odinger
  equation
  \begin{equation}
    \label{eq:nlsM}
    i\d_t u + \Delta_{M} u= |u|^{2\sigma}u,\quad x\in M_k^n\quad ;\quad
u_{\mid t=0}=u_0\in H^1_{\rm rad}(M_k^n). 
  \end{equation}
Set $N= (2k+1)(n-1)+1$. For $2/N<\si<2/(n-2)$, asymptotic completeness
holds in $H^1_{\rm rad}(M)$: for all $u_0\in H^1_{\rm
  rad}(M)$, there exists $u_+ \in H^1_{\rm rad}(M)$ such that
\begin{equation*}
  \left\lVert u(t) -e^{it\Delta_M}u_+\right\rVert_{H^1(M)}\Tend t
  {+\infty}0. 
\end{equation*}
\end{theorem}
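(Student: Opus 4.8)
The plan is to prove asymptotic completeness on the interpolating manifolds $M_k^n$ following the same scheme that establishes Theorem~\ref{theo:CA} on $\H^n$, but keeping careful track of how the polynomial growth rate of $\phi(r)$ (rather than the exponential growth $\sinh r$) degrades the available decay. The central mechanism is that the effective dimension seen by radial solutions is governed by the volume element $\phi(r)^{n-1}$, which behaves like $r^{(2k+1)(n-1)}$ at infinity; this is precisely why the relevant threshold is $N=(2k+1)(n-1)+1$, the analogue of the Euclidean scattering condition $\sigma>2/N$ where $N$ plays the role of an effective dimension. For $k=0$ we have $\phi(r)=r$, $N=n$, and we recover the classical Euclidean condition $\sigma>2/n$; as $k\to\infty$ we have $N\to\infty$ and the threshold $2/N\to 0$, consistent with the hyperbolic case where no lower restriction on $\sigma$ survives.

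First I would establish a \emph{simple Morawetz estimate} adapted to the metric $ds^2=dr^2+\phi(r)^2d\omega^2$. For radial solutions $u=u(t,r)$, one tests the equation against a multiplier of the form $\partial_r u$ weighted by a suitable convexity weight built from $\phi$; the defocusing sign of the nonlinearity guarantees a favorable term, and the geometry of $M_k^n$ (nonnegative curvature contributions near the origin, polynomial volume growth at infinity) produces a coercive boundary/commutator term. The outcome should be a global space-time bound of the schematic form
\begin{equation*}
\int_0^{+\infty}\!\!\int_0^{+\infty} \frac{|u(t,r)|^{2\sigma+2}}{\text{(weight)}}\,\phi(r)^{n-1}\,dr\,dt \lesssim \|u_0\|_{H^1}^2,
\end{equation*}
uniform in time, which quantifies the decay of the nonlinear term along the flow. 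Second, I would invoke the weighted Strichartz estimates on $M_k^n$ from \cite{BD} (and \cite{VittoriaDR}), which in the radial case let one treat the free group $e^{it\Delta_M}$ as if it acted on a Euclidean space of the higher effective dimension associated to $N$; these are exactly the estimates that make $2/N$ the correct short-range exponent.

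The core of the argument is then a bootstrap: combining the Morawetz decay estimate with the weighted Strichartz estimates, I would show that the nonlinearity $|u|^{2\sigma}u$ lies in the relevant dual Strichartz space globally in time, provided $\sigma>2/N$ so that the nonlinear interaction is integrable against the effective-dimensional decay. This yields global finite Strichartz norms, whence the existence of an asymptotic state $u_+\in H^1_{\rm rad}(M)$ and convergence $\|u(t)-e^{it\Delta_M}u_+\|_{H^1}\to 0$ follows by the standard Cook/Duhamel argument: one writes $e^{-it\Delta_M}u(t)-e^{-is\Delta_M}u(s)=i\int_s^t e^{-i\tau\Delta_M}|u|^{2\sigma}u\,d\tau$ and shows this is Cauchy as $t,s\to+\infty$, using the dual Strichartz control of the Duhamel term on $[s,\infty)$.

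The main obstacle will be the first step: producing a clean, globally coercive Morawetz identity on $M_k^n$ for \emph{finite} $k$. On $\H^n$ the exponential weight $\sinh r$ makes the commutator term manifestly positive with a large margin, but here $\phi(r)=\sum_{j=0}^k r^{2j+1}/(2j+1)!$ only grows polynomially, so the favorable geometric term is weaker and one must verify that $\phi''/\phi$ and the induced curvature-type quantities have the right sign (or are controllable) uniformly on $[0,\infty)$, including at $r=0$ where regularity of radial functions and the behavior $\phi(r)\sim r$ must be respected. The delicate accounting here—confirming that the Morawetz weight, chosen compatibly with $\phi$, yields enough decay to close the bootstrap exactly at the threshold $\sigma>2/N$—is where the explicit structure of $\phi$ (in particular that all its Taylor coefficients are nonnegative, giving $\phi'\ge 1$ and convexity) will be essential, and is the technical heart that distinguishes this theorem from its hyperbolic counterpart.
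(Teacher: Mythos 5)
Your overall architecture coincides with the paper's (a virial/Morawetz estimate with multiplier $a(x)=d_M(O,x)$, the weighted radial Strichartz estimates of \cite{BD}, a bootstrap, then a Duhamel/Cauchy argument), and you correctly flag the sign verification for finite $k$: the paper indeed proves $-\Delta_M^2a>0$ for all $r>0$ by exploiting the positivity of the Taylor coefficients of $\phi$, via $\phi''=\phi-r^{2k+1}/(2k+1)!$ (Lemma~\ref{lem:MorawetzM}); note in passing that $M_k^n$ has \emph{negative} sectional curvature — this, not ``nonnegative curvature contributions'', is what makes the Hessian of $r$ positive — and that $n\ge4$ enters because $-\Delta^2_M a\sim (n-1)(n-3)/r^3$ as $r\to0$ degenerates when $n=3$. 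The first genuine gap is that you record the Morawetz information in the wrong form: you keep the nonlinear term and announce a bound on $\iint |u|^{2\si+2}$ against some unspecified weight. The paper \emph{discards} that term (it has a favorable sign since $\Delta_M a=(n-1)\phi'/\phi\ge 0$) and instead retains the bilaplacian term, obtaining $\int_0^T\int_M r^{-3}|u|^2\,dx\,dt\lesssim \sup_t\lVert u(t)\rVert_{H^1}^2$; it is this $L^1_{t,x}$ bound on $f=|u|^2/r^3$ that drives everything afterwards. Exploiting a $|u|^{2\si+2}$-type bound in the classical way requires a pointwise dispersive rate for $e^{it\Delta_M}$, which is exactly what the paper says is unavailable for $n\ge 4$ (even on $\H^n$, and on $M_k^n$ no Fourier analysis is at hand), so the step you wrote down points toward an argument that cannot be closed as such.

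The second and decisive gap: the sentence ``I would show that the nonlinearity lies in the relevant dual Strichartz space globally in time, provided $\si>2/N$'' is an assertion, not a derivation, and it is precisely where the content of the theorem sits — the threshold $2/N$ does \emph{not} come from the Morawetz estimate (which holds for every $\si>0$) but from the paper's key H\"older splitting (Lemma~\ref{lem:algebre2}): one writes ${\tt w}_n^{1-2/q'}|u|^{2\si}|Au|$ as the product of $\bigl(r^{-3/2}|u|\bigr)^\alpha$ (estimated in $L^{2/\alpha}_{t,x}$ by $\lVert f\rVert_{L^1}^{\alpha/2}$), $|u|^{2\si-\alpha}$ (by Sobolev $H^1\subset L^a$ with a \emph{free} parameter $a\in[2,2^*]$), ${\tt w}_n^{2/n}|Au|$ (weighted endpoint Strichartz at $(2,2^*)$), and the residual weight ${\tt w}_n^{1-2/q'-2/n}r^{3\alpha/2}$ in $L^\theta$. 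Integrability of this residual weight against the volume element $\phi(r)^{n-1}dr\sim c\,r^{N-1}dr$ at infinity forces the condition $a/N<\si<a/n$, solvable for some $a\in[2,2^*]$ exactly when $2/N<\si<2/(n-2)$ (here $N>n$ is used); a heuristic of ``integrability against effective-dimensional decay'' cannot produce this interval condition. Moreover, the strictness of these inequalities is then used, by openness, to take $\alpha>0$: the resulting factor $\lVert f\rVert_{L^1(I\times M)}^{\alpha/2}$ is the \emph{only} source of smallness in the proof, obtained by partitioning $\R_+$ into finitely many intervals on which $\lVert f\rVert_{L^1}\le\eps$, and without extracting this positive power of the Morawetz density the bootstrap has no small parameter and the global-in-time argument fails for large data. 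Your proposal contains no substitute for this computation, and you yourself defer it as ``the technical heart''; as it stands, the proof is missing its central quantitative step.
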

From the above example, we see that this result is a
transition between Euclidean ($k=0$) and hyperbolic ($k\to \infty$) cases. In
view of the results of \cite{BD}, we infer
\begin{corollary}
    For $n\ge 4$, $k\in \N$, $N= (2k+1)(n-1)+1$ and 
  \begin{equation*}
    \frac{2}{N}<\si<\frac{2}{n-2}, 
  \end{equation*}
the scattering operator $S=W_+^{-1}W_-$ associated to
  \eqref{eq:nlsM} is well-defined from $H_{\rm rad}^1(M)$ to
  $H_{\rm rad}^1(M)$: for all $u_-\in H_{\rm rad}^1(M)$, there exists
  $u\in C(\R; H_{\rm rad}^1(M))$ solution to 
  \begin{equation*}
    i\d_t u + \Delta_{M} u = \lvert u\rvert^{2\si}u,
  \end{equation*}
such that
\begin{equation*}
  \left\lVert u(t)- e^{it\Delta_{M}}u_-\right\rVert_{H^1(M)} \Tend
  t{-\infty }0, 
\end{equation*}
and a unique $u_+=Su_- \in H_{\rm rad}^1(M)$ such that
\begin{equation*}
\left\lVert
  u(t)-e^{it\Delta_{M}}u_+\right\rVert_{H^1(M)} \Tend t {+\infty}0 .
\end{equation*}
\end{corollary}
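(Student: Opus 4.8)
The plan is to obtain this corollary as a soft consequence of Theorem~\ref{theo:scattM}, in complete parallel with the way Corollary~\ref{cor:scatt} is deduced from Theorem~\ref{theo:CA}. All the analytic substance (global well-posedness, simple Morawetz and weighted Strichartz estimates) is already packaged inside the asymptotic completeness statement; what remains is to combine it with the existence of the wave operators and to check that the free flow is isometric on $H^1(M)$. Accordingly, I would first record, from the results of \cite{BD}, that for $2/N<\si<2/(n-2)$ the wave operators
\begin{equation*}
  W_\pm : H^1_{\rm rad}(M)\to H^1_{\rm rad}(M)
\end{equation*}
are well defined: to each asymptotic state one associates, by solving the Duhamel equation from $t=\pm\infty$ and finding a unique fixed point in a suitable weighted Strichartz space, a global solution $u\in C(\R;H^1_{\rm rad}(M))$ of \eqref{eq:nlsM} with the prescribed behaviour as $t\to\pm\infty$. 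Here $W_\pm$ sends the asymptotic state $u_\pm$ to the corresponding datum $u(0)$.

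Given $u_-\in H^1_{\rm rad}(M)$, I then take $u$ to be the solution with datum $W_- u_-$, so that $\|u(t)-e^{it\Delta_M}u_-\|_{H^1(M)}\to 0$ as $t\to -\infty$ by construction. Applying Theorem~\ref{theo:scattM} to the datum $u(0)$ produces a state $u_+\in H^1_{\rm rad}(M)$ with $\|u(t)-e^{it\Delta_M}u_+\|_{H^1(M)}\to 0$ as $t\to+\infty$; this is precisely the surjectivity of $W_+$, since every datum now lies in its range. The operator $W_+$ is also injective, because the asymptotic state is recovered as the $H^1$-limit of $e^{-it\Delta_M}u(t)$ and is therefore uniquely determined by the solution. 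Hence $W_+$ is a bijection and $u_+ = W_+^{-1}u(0)=W_+^{-1}W_- u_- = S u_-$, as claimed.

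For the uniqueness of $u_+$ (equivalently, the injectivity of $W_+$), I would use that $e^{it\Delta_M}$ is unitary on $H^1(M)$. Taking the norm $\|f\|_{H^1(M)}=\|(1-\Delta_M)^{1/2}f\|_{L^2(M)}$, functional calculus for the self-adjoint operator $-\Delta_M$ shows that $e^{it\Delta_M}$ commutes with $(1-\Delta_M)^{1/2}$ and is $L^2$-isometric, hence $H^1$-isometric. Thus if two states $u_+,\tilde u_+$ share the same forward limit, then $\|u_+-\tilde u_+\|_{H^1(M)}=\|e^{it\Delta_M}(u_+-\tilde u_+)\|_{H^1(M)}\to 0$, forcing $u_+=\tilde u_+$.

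There is no genuine obstacle at the level of the corollary: the argument is purely organizational once Theorem~\ref{theo:scattM} is granted. The one point deserving care is to confirm that the wave operators supplied by \cite{BD} are defined on the full range $2/N<\si<2/(n-2)$ and in the same functional framework (the weighted Strichartz spaces attached to the effective dimension $N$) as the solutions to which the asymptotic completeness theorem applies, so that the solution constructed through $W_-$ is literally the object fed into Theorem~\ref{theo:scattM}. All the difficulty has been absorbed into that theorem.
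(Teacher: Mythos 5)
Your proposal is correct and takes essentially the same route as the paper, which disposes of this corollary in one line (``In view of the results of \cite{BD}, we infer''): existence of the wave operators $W_\pm$ on $H^1_{\rm rad}(M)$ for $2/N<\si<2/(n-2)$ comes from the weighted Strichartz estimates of \cite{BD} (the paper notes explicitly that these give wave operators ``with the same algebraic conditions as in Theorem~\ref{theo:scattM}''), and Theorem~\ref{theo:scattM} then yields surjectivity/invertibility of $W_+$, so $S=W_+^{-1}W_-$ is well defined. Your additional checks (uniqueness of $u_+$ via the $H^1(M)$-isometry of $e^{it\Delta_M}$, and the compatibility of the functional framework of \cite{BD} with that of the theorem) are exactly the points the paper leaves implicit, and they are handled correctly.
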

\begin{remark}
  We see that as soon as $k\ge 1$, $2/N<1/n$. In view of the results
  of \cite{Barab,Strauss74}, this shows that the curved geometry
  already changes the short range/long range borderline. In
  Section~\ref{sec:free}, we present a rather formal argument, relying
  on the description of the free dynamics (see
  Proposition~\ref{prop:free} below) 
  indicating that for $\si\le 1/N$, long range effects are present
  (see Remark~\ref{rem:long}).  
\end{remark}
\begin{remark}
  The proof we
  present still works for other functions $\phi$. We choose to
  restrict our attention to such spaces $M_k^n$ in order to emphasize
  the transition between Euclidean and hyperbolic spaces. 
\end{remark}
\begin{remark}
  The existence of a ``scattering'' dimension $N= (2k+1)(n-1)+1$ can
  be compared to Sobolev embeddings on the Heisenberg group. It is
  shown in \cite{BGX} that the indices for Sobolev embeddings on the
  $(2n+1)$-dimensional Heisenberg group correspond to their counterparts on
  $\R^{2n+2}$.  
\end{remark}
To conclude this introduction, and give a rather general picture of
large time dynamics of solutions to Schr\"odinger equations, we
describe the asymptotic behavior of the free dynamics in the radial
setting. It seems that the analogous result in the non-radial case,
even on hyperbolic space, is not available so far.  
\begin{proposition}\label{prop:free}
  Let $n\ge 2$. \\
$(1)$ Consider the linear
equation
 \begin{equation*}
i\d_t u + \Delta_{\mathbb{H}^n} u= 0,\quad x\in\H^n\quad ;\quad
u_{\mid t=0}=u_0\in L^2_{\rm rad}(\H^n).
\end{equation*}
There exists a linear operator $\L$, unitary from $L^2_{\rm
  rad}(\H^n)$ to $L^2_{\rm rad}(\R^n)$,  such that
\begin{align*}
  &\left\lVert u(t) -v(t)\right\rVert_{L^2(\H^n)}\Tend t {+\infty}
  0,\\
\text{where }& v(t,r) =
  \frac{e^{-i(n-1)t/2+ir^2/(4t)}}{t^{n/2}} \(\frac{r}{\sinh
  r}\)^{\frac{n-1}{2}} \(\L u_0\)\(\frac{r}{t}\). 
\end{align*}
$(2)$ Let $k\ge 1$. Consider the linear
equation
 \begin{equation*}
i\d_t u + \Delta_{M} u= 0,\quad x\in M^n_k\quad ;\quad
u_{\mid t=0}=u_0\in L^2_{\rm rad}(M^n_k).
\end{equation*}
There exists a linear operator $\L$, unitary from $L^2_{\rm
  rad}(M^n_k)$ to $L^2_{\rm rad}(\R^n)$, such that
\begin{align*}
  &\left\lVert u(t) -v(t)\right\rVert_{L^2(M)}\Tend t {+\infty}
  0,\\
\text{where }& v(t,r) =
  \frac{e^{ir^2/(4t)}}{t^{n/2}} \(\frac{r}{\phi
  (r)}\)^{\frac{n-1}{2}} \(\L u_0\)\(\frac{r}{t}\). 
\end{align*}
\end{proposition}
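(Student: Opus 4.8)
The plan is to strip off the volume factor by the ground-state (Liouville) substitution and reduce both statements to a one-dimensional dispersive analysis. Write $a(r)=(\sinh r)^{(n-1)/2}$ in case $(1)$ and $a(r)=\phi(r)^{(n-1)/2}$ in case $(2)$, and set $w=a\,u$. Since the Riemannian measure on radial functions is $a(r)^2\,dr\,d\omega$, the map $u\mapsto w$ is, up to the fixed constant $|\Sph^{n-1}|^{1/2}$, unitary from $L^2_{\rm rad}$ onto $L^2((0,\infty))$. A direct computation, using only that the drift coefficient $(n-1)\phi'/\phi$ of the radial Laplacian equals $2a'/a$, shows that the first-order term cancels and that $w$ solves $i\d_t w=-\d_r^2 w+W(r)\,w$ with $W=a''/a=\tfrac{n-1}{2}\tfrac{\phi''}{\phi}+\tfrac{(n-1)(n-3)}{4}\big(\tfrac{\phi'}{\phi}\big)^2$. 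Near $r=0$ one has $\phi(r)\sim r$, so $W(r)\sim\tfrac{(n-1)(n-3)}{4r^2}$, the same inverse-square singularity (Bessel index $(n-2)/2$) as for the Euclidean radial Laplacian; at infinity $W(r)\to (n-1)^2/4$ in case $(1)$, the bottom of the $L^2(\H^n)$-spectrum of $-\Delta_{\H^n}$, whereas $W(r)\sim\tfrac{(N-1)(N-3)}{4r^2}\to 0$ in case $(2)$.

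Next I would build the distorted spectral (Hankel-type) transform of the one-dimensional operator $-\d_r^2+W$. Because of the inverse-square singularity at the origin, I construct the generalized eigenfunctions $e(r,\lambda)$ that are regular at $r=0$ and oscillatory at infinity, $e(r,\lambda)\sim$ a combination of $e^{\pm i\lambda r}$; the associated transform $\mathcal T$ is unitary from $L^2((0,\infty))$ onto the spectral space and diagonalizes the flow, $e^{-it(-\d_r^2+W)}w_0=\mathcal T^{-1}\big(e^{-itE(\lambda)}\mathcal T w_0\big)$ with $E(\lambda)=\lambda^2+\lim_{r\to\infty}W$. Composing $\mathcal T$ with the inverse Hankel transform of index $(n-2)/2$, which realizes a function of $\lambda$ as an element of $L^2_{\rm rad}(\R^n)$, with a dilation by $1/2$, and with the Liouville isometry, defines the unitary operator $\L$. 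The constant $\lim_{r\to\infty}W$ factors out of the flow as the scalar phase appearing in $v$, and is absent when $k\ge 1$ since then this limit is $0$.

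With the flow diagonalized, the asymptotics becomes a stationary-phase statement. In the oscillatory factor $e^{i\lambda r-itE(\lambda)}$, at fixed $r/t$ and $t\to+\infty$, the phase $\lambda r-t\lambda^2$ has a single nondegenerate critical point $\lambda_*=r/(2t)$, at which it equals $r^2/(4t)$; the second derivative $-2t$ produces the factor $t^{-1/2}$. Evaluating by the stationary-phase formula and absorbing the resulting dilation, the factor $2^{-n/2}$ and the phase $i^{-n/2}$ into the definition of $\L$ yields the self-similar profile $w(t,r)\approx t^{-1/2}e^{ir^2/(4t)}g(r/t)$, where $g(\rho)=\rho^{(n-1)/2}(\L u_0)(\rho)$. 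Undoing the substitution, $v=w/a$ with $a=\phi^{(n-1)/2}$ (resp. $(\sinh r)^{(n-1)/2}$), gives $v(t,r)=t^{-n/2}e^{ir^2/(4t)}(r/\phi(r))^{(n-1)/2}(\L u_0)(r/t)$, i.e. exactly the claimed expression, the power $t^{-n/2}$ arising as $t^{-1/2}\cdot t^{-(n-1)/2}$. The identity $\int_0^\infty|v|^2a^2\,dr=\int_0^\infty|(\L u_0)(\rho)|^2\rho^{\,n-1}\,d\rho$ then shows $\|v(t)\|_{L^2}=\|\L u_0\|_{L^2(\R^n)}=\|u_0\|_{L^2}$, confirming the normalization.

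The hard part is to upgrade the pointwise stationary-phase asymptotics to convergence in $L^2$, and above all to treat case $(2)$, where the tail of $W$ is \emph{exactly} inverse-square, $\sim\tfrac{(N-1)(N-3)}{4r^2}$, rather than short range. Thus $-\d_r^2+W$ is not a short-range perturbation of $-\d_r^2$ at infinity, and one faces two different Bessel regimes, index $(n-2)/2$ at the origin and $(N-2)/2$ at infinity; the generalized eigenfunctions carry a nontrivial phase modification from the latter, and the main point is to show that this affects only the definition of $\L$, not the leading self-similar profile. The cleanest route I see is to compare directly with the free evolution $e^{it\d_r^2}$ in the self-similar region: there $r\sim t$ forces $W(r)=O(t^{-2})$, so a Duhamel/Cook estimate combined with the dispersive bound $\|e^{it\d_r^2}\|_{L^1\to L^\infty}\lesssim t^{-1/2}$ controls the remainder, the contribution of the tail being integrable in time once $N>n$, that is once $k\ge 1$. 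The delicate bookkeeping is to split the $r$-integration near $r\sim\sqrt t$ and $r\sim t$ and to absorb the low-frequency and endpoint contributions of the stationary-phase expansion, which is where the regularity of $\L u_0$ and the unitarity of $\mathcal T$ are used.
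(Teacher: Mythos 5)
Your first step---the Liouville substitution $w=\phi^{(n-1)/2}u$, the cancellation of the drift term, and the limits $W\to(n-1)^2/4$ on $\H^n$ and $W\sim\frac{(N-1)(N-3)}{4r^2}$ for finite $k$---coincides with the paper's starting point, but the paper conjugates towards $\R^n$ rather than towards the half-line: setting $\widetilde u=(\phi(r)/r)^{(n-1)/2}u$, one gets $i\d_t\widetilde u+\Delta_{\R^n}\widetilde u=V\widetilde u$ with $V=W-\frac{(n-1)(n-3)}{4r^2}$, which is \emph{smooth} at the origin and, for finite $k$, $O(r^{-2})$ at infinity. This one change dissolves both difficulties you single out. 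First, your claim that an exactly inverse-square tail is ``not short range'' is a misdiagnosis: short range for $L^2$ wave operators means $O(r^{-1-\eps})$ decay, and $O(r^{-2})$ qualifies comfortably (inverse-square tails are borderline for threshold and time-decay estimates, not for existence and completeness of wave operators). Hence the paper simply quotes classical scattering theory for $-\Delta_{\R^n}+V$ to produce $\widetilde u_+$---your Duhamel/Cook comparison ``in the self-similar region'' is in essence Cook's method, which works globally and needs no splitting into $r\sim\sqrt t$ and $r\sim t$---and then concludes by the factorization $e^{it\Delta_{\R^n}}=MD\F M$; the operator $\L$ is the composition of these maps, and no distorted Hankel transform, stationary phase, or matching of two Bessel indices is needed. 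Second, staying on $\R^n$ also removes the origin singularity that your half-line operator carries: for $n=2$ your $W\sim-\frac{1}{4r^2}$ at $r=0$ is exactly the limit-circle borderline, so the self-adjoint realization of $-\d_r^2+W$ (the Friedrichs one) would have to be identified with the one induced by $\Delta_M$, a point your sketch does not address.

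The genuine gap is that the two steps carrying all the analytic weight in your route are only announced, not proved: (i) the construction and unitarity of the spectral transform $\mathcal T$, which presupposes purely absolutely continuous spectrum, absence of eigenvalues, and the correct boundary condition at $r=0$; and (ii) the upgrade of the pointwise stationary-phase asymptotics to convergence in $L^2$, which you explicitly defer to ``delicate bookkeeping''. Since these two items \emph{are} the content of the proposition, the proposal as written is an outline rather than a proof; it becomes one either by importing the standard short-range theory after moving to $\R^n$ (the paper's route), or by actually carrying out the distorted-transform analysis, which is considerably heavier. One further caution: your computation $\lim_{r\to\infty}W=(n-1)^2/4$ is correct (it is the bottom of the $L^2$-spectrum), and it is this constant that factors out of the flow; it agrees with the phase $e^{-i(n-1)t/2}$ in the statement only when $n=3$, so your assertion that the limit ``factors out as the scalar phase appearing in $v$'' silently glosses over this discrepancy---which in fact points to a misprint in the stated phase for $n\neq 3$, since in the paper's reduction as well one has $V\to\frac{n-1}{2}+\frac{(n-1)(n-3)}{4}=\frac{(n-1)^2}{4}$ on $\H^n$.
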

\begin{remark}
 In the Euclidean case $k=0$, $\L$ is, up to a multiplicative constant
and a dilation, the usual Fourier transform. In the case of $\H^3$,
the first point of Proposition~\ref{prop:free} was established in
\cite{BCS}. There again, $\L$ is essentially the Fourier transform. It
is not clear whether the same holds in the case of $\H^n$, for $n\not
= 3$, where Fourier analysis is well developed. See
Remark~\ref{rem:Fourier}. 
\end{remark}
The rest of the paper is organized as follows. In the next paragraph,
we recall the general approach for Morawetz inequalities, and give
applications for the case of defocusing nonlinear Schr\"odinger
equations on $\H^n$ or $M$. We prove Theorems~\ref{theo:CA} and
\ref{theo:scattM} in \S\ref{sec:CAH} and \S\ref{sec:scattM},
respectively. Proposition~\ref{prop:free} is established in
\S\ref{sec:free}.

\section{Morawetz inequality}
\label{sec:Morawetz}

We first recall the general virial computation on a manifold
$M$, where technical ingredients such as integration by parts work as
in the Euclidean case. Typically, $M$ can be chosen to be $\R^n$,
$\R^n\times \R^n$,  $\H^n$ or $M_k^n$, 
with no restriction on the dimension. The homogeneous contribution is
treated in \cite{HTW05}, and the inhomogeneous case is easily
inferred. 
\begin{lemma}[Virial inequality]  \label{viriel}
Let $a$ be a real function on $M$ with positive Hessian. 
If $v$ is a global  $L^\infty(\R,H^1(M))$ solution of
\begin{equation}\label{nlsgen} 
i\partial_t v + \Delta_Mv=Fv\quad ;\quad 
v_{\mid t=0}=v_0\in H^1(M),
\end{equation}
then there exists a positive constant $C$ such that
\begin{equation}\label{virielcomp}
  \begin{aligned}
    \int_0^T\Big(\int_M (-\Delta^2 a)\frac{|v|^2}{2}&+\RE\int_M 
 2 Fv\nabla\overline{v}\cdot \nabla a+\overline{F}|v|^2\Delta a
  \Big)\le\\
&\le 
C\sup_{t\in[0,T]}\int_M |\overline{v}\nabla v\cdot\nabla a|. 
  \end{aligned}
\end{equation}
\end{lemma}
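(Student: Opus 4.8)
The plan is to recognize the integrand on the left of \eqref{virielcomp} as (essentially) the time derivative of a Morawetz functional, to integrate in time, and to control the resulting boundary contributions by the right-hand side. Concretely, I would introduce the momentum-type quantity
\[
\M(t)=\IM\int_M \overline v\,\nabla v\cdot\nabla a,
\]
built from the velocity field $\nabla a$. Since $\nabla a$ is real, $|\M(t)|\le \int_M|\overline v\,\nabla v\cdot\nabla a|$, so that $\sup_{t}|\M(t)|$ is precisely the quantity appearing on the right of \eqref{virielcomp}. The strategy is then to establish a pointwise-in-time identity expressing $\frac{d}{dt}\M(t)$ as the left-hand integrand plus a nonnegative term, to discard the latter using the positivity of the Hessian of $a$, and to integrate over $[0,T]$.

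The core of the argument is the computation of $\frac{d}{dt}\M(t)$. Differentiating under the integral and substituting $\partial_t v = i\Delta_M v - iFv$ from \eqref{nlsgen}, I would split the result into a free (linear) contribution coming from $i\Delta_M v$, and a nonlinear contribution coming from $-iFv$. For the free part, repeated integration by parts is the standard virial computation on a manifold, carried out in \cite{HTW05}; it produces
\[
\frac{d}{dt}\M(t)\Big|_{\mathrm{free}}=\int_M(-\Delta^2 a)\frac{|v|^2}{2}+2\int_M (D^2a)(\nabla v,\nabla\overline v),
\]
where $D^2a$ denotes the (covariant) Hessian. Crucially, the quadratic form $(D^2a)(\nabla v,\nabla\overline v)$ is nonnegative because $a$ has positive Hessian, which is exactly where that hypothesis enters and why the conclusion is an inequality rather than an identity.

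For the nonlinear part, the terms generated by $-iFv$ are $\RE\int_M \overline F\,\overline v\,\nabla v\cdot\nabla a-\RE\int_M|v|^2\,\nabla F\cdot\nabla a-\RE\int_M F\,\overline v\,\nabla v\cdot\nabla a$. The only awkward piece is the one carrying a derivative of $F$; I would remove it by integrating by parts, writing $\int_M|v|^2\nabla F\cdot\nabla a=-\int_M F\big(\nabla|v|^2\cdot\nabla a+|v|^2\Delta a\big)$ and using $\nabla|v|^2=2\RE(\overline v\nabla v)$. After this manipulation the two terms carrying $F\,\overline v\,\nabla v\cdot\nabla a$ cancel, the remaining first-order terms combine (since $\overline F\,\overline v\,\nabla v\cdot\nabla a$ is the complex conjugate of $Fv\,\nabla\overline v\cdot\nabla a$ and $\nabla a$ is real), and one is left with exactly $\RE\int_M\big(2Fv\,\nabla\overline v\cdot\nabla a+\overline F|v|^2\Delta a\big)$, matching the nonlinear terms in \eqref{virielcomp}.

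Combining the two parts gives $\frac{d}{dt}\M(t)=(\text{left-hand integrand})+2\int_M(D^2a)(\nabla v,\nabla\overline v)$. Dropping the nonnegative Hessian term yields $(\text{left-hand integrand})\le \frac{d}{dt}\M(t)$, and integrating over $[0,T]$ gives $\int_0^T(\cdots)\le \M(T)-\M(0)\le 2\sup_{t\in[0,T]}|\M(t)|\le C\sup_{t}\int_M|\overline v\,\nabla v\cdot\nabla a|$, which is the claim. The main obstacle I anticipate is not the algebra but its justification at the stated $H^1$ regularity: the identity formally involves $\Delta^2 a$ and second derivatives of $v$, so rigorously one should first establish it for smooth, suitably decaying solutions and then pass to the limit by density and the $L^\infty_tH^1$ bound, checking that all boundary terms vanish and that the integrations by parts on $M$ (with the flat Laplacian replaced by $\Delta_M$, producing the covariant Hessian and the associated curvature corrections) are legitimate. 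This is exactly what the text defers to \cite{HTW05} for the homogeneous case, the inhomogeneous terms above being an easy addition.
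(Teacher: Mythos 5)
Your proof is correct and follows the same route the paper takes: the standard virial computation for the momentum functional $\IM\int_M \overline v\,\nabla v\cdot\nabla a$ (which the paper simply delegates to \cite{HTW05} for the homogeneous part), with the inhomogeneous terms handled exactly as you do, the positive Hessian term discarded, and the time integration controlled by $\sup_t|\M(t)|$. Your algebra for the nonlinear contribution checks out (note only that the Hessian term should carry a $\RE$, i.e.\ $2\RE\int_M D^2a(\nabla v,\nabla\overline v)$, which is nonnegative by the hypothesis, and that $\RE(F|v|^2\Delta a)=\RE(\overline F|v|^2\Delta a)$ reconciles your expression with \eqref{virielcomp}), and your closing remark about justifying the computation at $H^1$ regularity by density matches the paper's implicit reliance on \cite{HTW05}.
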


\begin{lemma}[Morawetz inequality on $\H^n$] \label{simpleM}
Let $n\ge
 4$. All solutions $u$ of equation \eqref{nls} (not necessarily
 radial) satisfy to 
\begin{equation}\label{Mosimple}
\int_0^T\int_{\H^n} \frac{\cosh r}{\sinh^3r}|u(t,x)|^2dx\,dt\le
C\sup_{t\in[0,T]}\| u(t)\|_{H^1}^2,
\end{equation}
where $r=d_{\H^n}(O,x)$. 
\end{lemma}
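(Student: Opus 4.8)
The plan is to apply the virial inequality of Lemma~\ref{viriel} to the solution $u$ of \eqref{nls} with the weight $a(x)=r=d_{\H^n}(O,x)$, the geodesic distance to the fixed pole $O$, and then to read off \eqref{Mosimple} from the explicit computation of $\nabla a$, $\Delta a$ and $\Delta^2 a$ on $\H^n$. Since $u$ solves a defocusing, energy-subcritical equation ($\si<2/(n-2)$), conservation of mass and energy guarantees $u\in L^\infty(\R;H^1(\H^n))$, so Lemma~\ref{viriel} applies with $F=|u|^{2\si}$; no radial symmetry is needed.

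First I would record the geometric facts about $a=r$. The distance function satisfies $|\nabla a|=1$ away from $O$, and its Hessian is positive semidefinite: it vanishes in the radial direction, while on the geodesic spheres it equals $\frac{\cosh r}{\sinh r}$ times the induced metric (the second fundamental form of the spheres), which is nonnegative. This is exactly the hypothesis of Lemma~\ref{viriel}, the nonnegativity of the Hessian being what allows the kinetic term to be discarded from the left-hand side of \eqref{virielcomp}. For a radial profile the Laplacian reduces to $\Delta f=f''+(n-1)\frac{\cosh r}{\sinh r}f'$, so with $a=r$ one gets $\Delta a=(n-1)\frac{\cosh r}{\sinh r}\ge 0$, and applying $\Delta$ once more to this radial function yields $-\Delta^2 a=(n-1)(n-3)\frac{\cosh r}{\sinh^3 r}$.

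Next I would handle the two remaining terms of \eqref{virielcomp}. On the right-hand side, $|\nabla a|=1$ gives $\int_{\H^n}|\bar u\,\nabla u\cdot\nabla a|\le\|u\|_{L^2}\|\nabla u\|_{L^2}\le\|u\|_{H^1}^2$, producing the desired bound $C\sup_{[0,T]}\|u\|_{H^1}^2$. For the nonlinearity, writing $|u|^{2\si}\nabla(|u|^2)=\frac{1}{\si+1}\nabla(|u|^{2\si+2})$ and integrating by parts, the two contributions $\RE\int 2Fu\,\nabla\bar u\cdot\nabla a$ and $\int\bar F|u|^2\Delta a$ combine into $\frac{\si}{\si+1}\int_{\H^n}|u|^{2\si+2}\Delta a$, which is nonnegative since $\Delta a\ge 0$; being a nonnegative term on the left-hand side, it is simply dropped. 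What survives on the left is $\frac{(n-1)(n-3)}{2}\int_0^T\int_{\H^n}\frac{\cosh r}{\sinh^3 r}|u|^2$, and since $(n-1)(n-3)>0$ for $n\ge 4$, dividing through yields \eqref{Mosimple}.

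The delicate point, and the reason for the restriction $n\ge 4$, is the behavior at the pole $r=0$, where $a=r$ fails to be smooth. One must check that $-\Delta^2 a$ is a genuine nonnegative locally integrable function there, with no spurious Dirac mass concentrated at $O$. Near $r=0$ one has $\frac{\cosh r}{\sinh^3 r}\sim r^{-3}$, which against the volume element $\sinh^{n-1}r\,dr\sim r^{n-1}\,dr$ is integrable precisely when $n\ge 4$; in that range the computation of $\Delta^2 a$ produces no distributional contribution at the origin, exactly as $-\Delta^2|x|=(n-1)(n-3)|x|^{-3}$ is locally integrable on $\R^n$ for $n\ge 4$. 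For $n=3$ the coefficient $(n-1)(n-3)$ vanishes and only a delta-type term at $O$ remains, which is why that case lies outside the present lemma and is instead treated through the interaction-Morawetz approach recalled in the introduction.
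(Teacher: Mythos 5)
Your proof is correct and follows essentially the same route as the paper: apply Lemma~\ref{viriel} with $a(x)=d_{\H^n}(O,x)$, drop the nonnegative nonlinear contribution $\frac{\si}{\si+1}\int|u|^{2\si+2}\Delta a$ after one integration by parts, and read off \eqref{Mosimple} from $-\Delta^2 a=(n-1)(n-3)\frac{\cosh r}{\sinh^3 r}$. Your additional verification that no Dirac mass arises at the pole for $n\ge 4$ (and your explanation of why $n=3$ degenerates) is a sound refinement of a point the paper leaves implicit, but it does not change the argument.
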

\begin{proof}
We apply Lemma~\ref{viriel}, with $M=\H^n$, $u=v$, $F=|u|^{2\sigma}$ and
$a(x)=r=d_{\H^n}(O,x)$. In the left hand side of \eqref{virielcomp},
the contribution of  the nonlinearity is 
$${\mathcal N}=\int_0^T\left(\int_{\H^n}\frac{2}{2\sigma+2}\,
  \nabla(|u|^{2\sigma+2})\cdot \nabla a+|u|^{2\sigma+2}\Delta a \right),$$ 
so by integrating by parts the first term,
$${\mathcal N}=\int_0^T\left(\int_{\H^n}-\frac{1}{\sigma+1}\,
  |u|^{2\sigma+2}\Delta a+|u|^{2\sigma+2}\Delta a
\right)=\int_0^T\int_{\H^n}\frac{\sigma}{\sigma+1}\,
|u|^{2\sigma+2}\Delta a.$$ 
Since $\Delta a=(n-1)\frac{\cosh r}{\sinh r}$, the nonlinear
contribution is non-negative (defocusing nonlinearity), and we get
$$\int_0^T\int_M (-\Delta^2 a)\frac{|u|^2}{2}\le
C\sup_{t\in[0,T]}\|u(t)\|_{H^1}^2.$$ 
By computing
$\displaystyle -\Delta^2 a(x)=(n-1)(n-3)\frac{\cosh r}{\sinh^3r},$ 
the lemma follows. 
\end{proof}
\begin{lemma}[Morawetz inequality on $M$]\label{lem:MorawetzM}
  Let $n\ge 4$ and $k\in \N$. All solutions to \eqref{eq:nlsM} (not
  necessarily radial) satisfy to
  \begin{equation}
    \label{eq:MorawetzM}
    \int_0^T \int_M \frac{1}{r^3}\lvert u(t,x)\rvert^2 dxdt \le C
    \sup_{t\in [0,T]} \lVert u(t)\rVert_{H^1(M)}^2,
  \end{equation}
where $r=d_M(O,x)$. 
\end{lemma}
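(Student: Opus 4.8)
The plan is to mimic the proof of Lemma~\ref{simpleM}, replacing $\sinh r$ by $\phi(r)$ throughout, and to reduce everything to a pointwise lower bound for $-\Delta_M^2 a$. First I would apply Lemma~\ref{viriel} on $M=M_k^n$ with $v=u$, $F=|u|^{2\sigma}$ and the radial weight $a(x)=r=d_M(O,x)$, whose Hessian is nonnegative in this rotationally symmetric geometry (exactly as in Lemma~\ref{simpleM}). As there, integrating by parts once shows that the nonlinear contribution to the left-hand side of \eqref{virielcomp} equals
$$\mathcal{N}=\int_0^T\int_M \frac{\sigma}{\sigma+1}\,|u|^{2\sigma+2}\,\Delta_M a,$$
and since $\Delta_M a=(n-1)\frac{\phi'}{\phi}\ge 0$ (because $\phi$ and $\phi'$ are sums of odd, respectively even, powers of $r$ with positive coefficients), this term is nonnegative and may be discarded. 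This leaves
$$\int_0^T\int_M (-\Delta_M^2 a)\,\frac{|u|^2}{2}\le C\sup_{t\in[0,T]}\|u(t)\|_{H^1(M)}^2.$$

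The second step is an explicit computation of $-\Delta_M^2 a$. Writing the radial Laplacian as $\Delta_M g=g''+(n-1)\frac{\phi'}{\phi}g'$ for radial $g$ and applying it to $\Delta_M a=(n-1)\frac{\phi'}{\phi}$, one finds
$$-\Delta_M^2 a=-(n-1)\left[\frac{\phi'''}{\phi}+(n-4)\frac{\phi''\phi'}{\phi^2}-(n-3)\frac{(\phi')^3}{\phi^3}\right].$$
As a sanity check, $\phi=\sinh r$ recovers the value $(n-1)(n-3)\cosh r/\sinh^3 r$ of Lemma~\ref{simpleM}, and $\phi=r$ recovers the Euclidean value $(n-1)(n-3)/r^3$. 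The lemma now follows once we establish the pointwise bound
$$-\Delta_M^2 a\ge \frac{c}{r^3},\qquad r>0,$$
for some $c=c(n,k)>0$: granting it, $\frac1{r^3}\le \frac1c(-\Delta_M^2 a)$ and \eqref{eq:MorawetzM} is immediate from the previous display.

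The main obstacle is exactly this pointwise inequality, which I would recast as strict positivity of the continuous function $\Phi(r):=r^3(-\Delta_M^2 a)$ on $(0,\infty)$. From the explicit formula its endpoint limits are
$$\lim_{r\to 0^+}\Phi(r)=(n-1)(n-3),\qquad \lim_{r\to+\infty}\Phi(r)=(N-1)(N-3),\qquad N=(2k+1)(n-1)+1,$$
both strictly positive for $n\ge 4$; for the limit at infinity I would use that $r\phi'/\phi$ is a weighted average of the values $2j+1$ and increases monotonically from $1$ to $2k+1$, so that $\Delta_M a\sim (N-1)/r$ and $M_k^n$ resembles $\R^N$ far from the origin. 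Extending $\Phi$ continuously to the compact set $[0,+\infty]$, it then suffices to prove that $\Phi$ does not vanish on $(0,\infty)$, for then it attains a strictly positive minimum $c$. This interior positivity is the genuinely technical point; I would derive it from the polynomial structure of $\phi$, using the identities $\phi''=\phi-\frac{r^{2k+1}}{(2k+1)!}$ and $\phi'''=\phi'-\frac{r^{2k}}{(2k)!}$ to rewrite the bracket above and read off its sign (for instance when $n=4$ the bracket collapses to $\bigl((\phi')^3-\phi'''\phi^2\bigr)/\phi^3$, whose positivity can be checked directly). Combining the reduction of the first two steps with this positivity yields \eqref{eq:MorawetzM}.
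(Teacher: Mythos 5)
Your scheme coincides step for step with the paper's own proof: the same virial functional with $a(x)=r$, the same sign argument discarding the nonlinear term via $\Delta_M a=(n-1)\phi'/\phi\ge 0$, the same formula for $\Delta_M^2 a$ (your bracket matches the paper's $\frac{n-1}{\phi^3}\bigl(\phi^2\phi^{(3)}+(n-4)\phi\phi'\phi''-(n-3)(\phi')^3\bigr)$), and the same endpoint asymptotics — your constant $(N-1)(N-3)$ at infinity is indeed equal to the paper's $(n-1)(2k+1)\bigl(2k(n-1)+n-3\bigr)$. Your compactness argument on $[0,+\infty]$ merely makes explicit what the paper leaves implicit after computing the two asymptotics, and your elementary justification of the Hessian sign (via $\phi'>0$ in the rotationally symmetric metric, rather than the paper's citation of negative sectional curvature) is fine. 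Everything you actually execute is correct.

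But the decisive step — strict positivity of $-\Delta_M^2 a$ on $(0,\infty)$ — is exactly where you stop: ``rewrite the bracket and read off its sign'' is a plan, not a proof, and the one concrete instance you offer does not go through as stated. For $n=4$ you must show $(\phi')^3-\phi^{(3)}\phi^2>0$; substituting $\phi^{(3)}=\phi'-r^{2k}/(2k)!$ yields
\begin{equation*}
(\phi')^3-\phi'\phi^2+\frac{r^{2k}}{(2k)!}\,\phi^2,
\end{equation*}
whose sign cannot be read off: $\phi'-\phi<0$ for large $r$ (since $\phi'\sim r^{2k}/(2k)!$ while $\phi\sim r^{2k+1}/(2k+1)!$), and the two competing terms $\phi'\phi^2$ and $\frac{r^{2k}}{(2k)!}\phi^2$ share the identical leading order $r^{6k+2}/\bigl((2k)!\,((2k+1)!)^2\bigr)$, so positivity is decided only after this cancellation, at lower order. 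The paper supplies the organizing idea you are missing: factor the numerator of $-\Delta_M^2 a$ as
\begin{equation*}
(n-1)\Bigl((n-3)\,\phi'\bigl((\phi')^2-\phi\phi''\bigr)+\phi\bigl(\phi'\phi''-\phi\phi^{(3)}\bigr)\Bigr),
\end{equation*}
observe that $\phi'\phi''-\phi\phi^{(3)}=\bigl((\phi')^2-\phi\phi''\bigr)'$, and compute explicitly, using precisely your two identities,
\begin{equation*}
\phi'\phi''-\phi\phi^{(3)}=\sum_{j=0}^{k}\frac{1}{(2j)!\,(2k)!}\Bigl(\frac{1}{2j+1}-\frac{1}{2k+1}\Bigr)r^{2k+2j+1}>0\quad (r>0,\ k\ge 1).
\end{equation*}
Hence $(\phi')^2-\phi\phi''$ is increasing from its value $1$ at $r=0$, so $(\phi')^2-\phi\phi''\ge 1$, and with $\phi'\ge 1$ and $n\ge 4$ both summands in the factorization are positive; this also closes your $n=4$ case, via the identity $(\phi')^3-\phi^2\phi^{(3)}=\phi'\bigl((\phi')^2-\phi\phi''\bigr)+\phi\bigl(\phi'\phi''-\phi\phi^{(3)}\bigr)$. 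Until you carry out this (or an equivalent) computation, your argument has a genuine gap at its central point.
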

\begin{proof}
  The proof follows the same lines as above. For $k=0$, this is the
  standard Morawetz estimate; see e.g. \cite{CazCourant}. We therefore
  assume $k\ge 1$. The manifold $M_k^n$ has
  a negative sectional curvature, so the Hessian of the function
  distance to the origin is positive (Theorem 3.6 of \S6 in
  \cite{Pe}). For $a(x)=r= d_M(O,x)$,  we compute
  \begin{align*}
    \Delta_M a &= (n-1)\frac{\phi'}{\phi}, \\
\Delta^2_M a &= \frac{n-1}{\phi^3}\( \phi^2\phi^{(3)}
+(n-4)\phi\phi'\phi'' -(n-3)\(\phi'\)^3\). 
  \end{align*}
Since $\Delta_M a$ is non-negative, the nonlinear term is neglected,
just like in the proof of Lemma~\ref{simpleM}. 
We check
\begin{align*}
 -\Delta^2_M a &\Eq r 0  (n-1)(n-3)\frac{1}{r^3},\\
-\Delta^2_M a &\Eq r \infty (n-1)(2k+1)\( 2k(n-1)+
n-3\)\frac{1}{r^3}. 
\end{align*}
To establish the lemma, it suffices to prove that $-\Delta^2_M a>0$
for $r>0$. 
Write the numerator of $-\Delta^2_M a$ as 
\begin{equation*}
  (n-1)\((n-3)\phi'\(\(\phi'\)^2 - \phi \phi''\) + \phi\( \phi'\phi''-
  \phi\phi^{(3)}\)\). 
\end{equation*}
We claim that for all $r> 0$ (and $k\ge 1$), 
\begin{equation*}
 \phi'(r)\(\(\phi'(r)\)^2 - \phi (r)\phi''(r)\) > 1\quad ;\quad 
\phi(r)\( \phi'(r)\phi''(r) -
  \phi(r)\phi^{(3)}(r)\)> 0.
\end{equation*}
Since
\begin{equation*}
 \phi'\phi''-
  \phi\phi^{(3)} =  \(\(\phi'\)^2 - \phi \phi''\)',
\end{equation*}
and $\phi(0)=\phi''(0)=0$ and $\phi'(0)=1$, it suffices to show that
the above quantity is 
non-negative. From
\begin{equation*}
  \phi''(r)=\phi(r) -\frac{1}{(2k+1)!}r^{2k+1},\quad
  \phi^{(3)}(r)=\phi'(r) -\frac{1}{(2k)!}r^{2k} ,
\end{equation*}
we infer
\begin{align*}
 \Big(\phi'\phi''-
  \phi\phi^{(3)}\Big)(r)
&= -\frac{\phi'(r)}{(2k+1)!}r^{2k+1}+\frac{\phi(r)}{(2k)!}r^{2k}\\
&=\sum_{j=0}^{k}\frac{1}{(2j)!(2k)!}
\left(\frac{1}{2j+1}-\frac{1}{2k+1}\right)r^{2k+2j+1}>0. 
\end{align*}
The estimate announced above follows,
hence the lemma.   
\end{proof}
\begin{remark}
  If the function $\phi$ is replaced by 
  \begin{equation*}
    \phi(r) = \sum_{j=0}^k \frac{a_j}{(2j+1)!}r^{2j+1} 
  \end{equation*}
for some $a_j>0$, then the results of \cite{BD} show that weighted
Strichartz estimates are available in the radial setting, showing the
existence of wave operators with the same algebraic conditions as in
Theorem~\ref{theo:scattM}. However, for $k\ge 2$ and a general family
$(a_j)_{0\le  j\le k}$ of positive numbers, it is not clear whether the
analogue of the above 
lemma is valid or not: it may very well happen that with our choice for $a$,
$-\Delta^2_Ma$ has some zero for $0<r<\infty$, thus ruining the above
argument.  
\end{remark}

\section{Asymptotic completeness in hyperbolic space}
\label{sec:CAH}

In this paragraph, we prove Theorem~\ref{theo:CA}. 
\smallbreak

Since we are in a defocusing case, we have a global in time \emph{a
  priori} estimate for the $H^1$-norm of $u$, hence the following
  control, without radial assumption: 
\begin{equation}\label{globalM}
\left\|\sqrt\frac{\cosh
    r}{\sinh^3 r}u(t,x)\right\|_{L^2(\R,L^2(\H^n))}\le C(u_0). 
\end{equation}

This global control will allow us to prove that $u$ belongs globally
in time to certain weighted mixed spaces, yielding asymptotic
completeness. We set: 
\begin{equation*}
  {\tt w}_n={\tt w}_n(r)=\(\frac{\sinh r}{r}\)^{\frac{n-1}{2}},
\end{equation*}
and we denote by $d\Omega$ the measure on $\H^n$.
We recall that $(p,q)$ is  $n$-admissible if 
  \begin{equation}
\label{admissible}
\frac{2}{p}+\frac{n}{q}=\frac{n}{2},\quad p\ge 2, \quad
(p,q,n)\neq(2,\infty,2). 
\end{equation}
We shall use the following global Strichartz estimates  for the radial
free evolution, established in \cite{VittoriaDR} for $n\ge 4$ : 
\begin{equation}\label{wStrichartzfree}
    \left\|e^{it\Delta_{\H^n}}f(\cdot)
    \right\|_{L^p({\R},L^q({\tt w}_n^{q-2}d\Omega))}\le C \|f\|_{L^2},
\end{equation}
\begin{equation}\label{wStrichartzinhom}
      \left\|\int_{I\cap\{s\le
      t\}} e^{i(t-s)\Delta_{\H^n}}F(s)ds 
      \right\|_{L^{p}(I,L^{q}({\tt w}_n^{q-2}d\Omega))}\le C\left\|
      F\right\|_{L^{r'}\(I,L^{s'}({\tt w}_n^{s'-2}d\Omega)\)},
    \end{equation}
for all radial functions $f\in L_{\rm rad}^2(\H^n)$, $F\in
  L^{r'}\(I;L^{s'}_{\rm 
  rad}\(\H^n,{\tt w}_n^{s'-2}d\Omega\)\)$ and every $n$-admissible
  pairs $(p,q)$ and $(r,s)$. 
If $A$ is a derivative in
  space of order one,  similar estimates hold with the
  operator $A$ in front of $f$ and of 
  the integral in \eqref{wStrichartzinhom}. The constants are
  independent of the   time interval $I$.
\smallbreak

Let $A\in \{{\rm Id},\nabla\}$. In view of the above Strichartz
estimates, we wish to  control   
\begin{equation*}
  {\tt w}_n^{1-2/q'} A\(\lvert u\rvert^{2\si}u\)\text{ in }L^{p'}\(I;L^{q'}\)
\end{equation*}
by some power of $\|u\|_{X(I)}$, where 
\begin{equation}
  \label{eq:X(I)}
  \begin{aligned}
  X(I)=\Big\{& v\in L^\infty\(I,H^1(d\Omega)\)\cap L^{2}\(I,W^{1,2^*}({\tt
  w}_n^{2^*-2}d\Omega)\),\\
&\|v\|_{X(I)}=\|v\|_{L^\infty\(I,H^1(d\Omega)\)}+\|v\|_{L^{2}\(I,W^{1,2^*}({\tt
  w}_n^{2^*-2}d\Omega)\)} <\infty\Big\}, 
\end{aligned}
\end{equation}
and $2^*= \frac{2n}{n-2}$ (the pair $(2,2^*)$ is admissible, since
$n\ge 3$). This is achieved in
the following lemma.
\begin{lemma}\label{lem:algebre}
  Fix $n\ge 4$ and $0<\si<\frac{2}{n-2}$. Let $u$ be a radial solution
  to \eqref{nls}, and $A\in \{{\rm Id},\d_r\}$. There exist an 
  $n$-admissible pair $(p,q)$, $0<\alpha<2\si$, 
  and $C>0$ such that for all time interval $I$, 
  \begin{equation}
    \label{eq:estim}
    \left\lVert {\tt w}_n^{1-2/q'}A\(\lvert u\rvert^{2\si}u\)
\right\rVert_{L^{p'}(I;L^{q'})} \le C\lVert f \rVert_{L^1\(I\times
  \H^n\)}^{\alpha/2}\left\lVert 
  u\right\rVert_{X(I)}^{2\si+1-\alpha},
  \end{equation}
where $X(I)$ is defined in \eqref{eq:X(I)}, and
\begin{equation*}
  f(t,r) =\frac{\cosh r}{\sinh^3 r} \lvert u(t,r)\rvert^2. 
\end{equation*}
\end{lemma}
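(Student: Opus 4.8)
The plan is to reduce the estimate to a product of two factors, one of which is exactly the Morawetz-controlled quantity $\|f\|_{L^1}^{\alpha/2}$ and the other a power of $\|u\|_{X(I)}$, by splitting the nonlinearity with Hölder's inequality. First I would dispose of the derivative: when $A=\d_r$, the Leibniz rule gives $|A(|u|^{2\sigma}u)|\lesssim |u|^{2\sigma}|\d_r u|$, so in both cases $A\in\{\mathrm{Id},\d_r\}$ the quantity to estimate is dominated by $|u|^{2\sigma}w$ with $w\in\{|u|,|\d_r u|\}$, where $w$ is exactly the factor that the $W^{1,2^*}$-component of the $X(I)$-norm controls.

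The key algebraic move is to peel off $\alpha/2$ copies of $f$. Since $f=\frac{\cosh r}{\sinh^3 r}|u|^2$, we have $|u|^{\alpha}=\big(\tfrac{\sinh^3 r}{\cosh r}\big)^{\alpha/2}f^{\alpha/2}$, so writing $|u|^{2\sigma}=|u|^{\alpha}|u|^{2\sigma-\alpha}$ we obtain
\[
  \w^{1-2/q'}|u|^{2\sigma}w
  =\Big(\w^{1-2/q'}\big(\tfrac{\sinh^3 r}{\cosh r}\big)^{\alpha/2}|u|^{2\sigma-\alpha}w\Big)\,f^{\alpha/2}.
\]
Next I apply Hölder in space with exponents $\frac{1}{q'}=\frac{\alpha}{2}+\frac1\rho$, placing $f^{\alpha/2}$ in $L^{2/\alpha}(d\Omega)$, and then Hölder in time with $\frac{1}{p'}=\frac{\alpha}{2}+\frac1\pi$, again placing the resulting quantity in $L^{2/\alpha}$. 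Because $\big\|f^{\alpha/2}\big\|_{L^{2/\alpha}(I\times\H^n)}=\|f\|_{L^1(I\times\H^n)}^{\alpha/2}$, this produces precisely the Morawetz factor, and it remains to bound the complementary factor
\[
  \Big\|\w^{1-2/q'}\big(\tfrac{\sinh^3 r}{\cosh r}\big)^{\alpha/2}|u|^{2\sigma-\alpha}w\Big\|_{L^{\pi}(I;L^{\rho}(d\Omega))}\lesssim \|u\|_{X(I)}^{2\sigma+1-\alpha}.
\]

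To close this second bound I send the single factor $w$ to the endpoint norm $L^2\big(I;W^{1,2^*}(\w^{2^*-2}d\Omega)\big)$ (which fixes $\pi=2$ and carries the weight $\w^{1-2/2^*}$), and the remaining $2\sigma-\alpha$ copies of $u$ to $L^\infty(I;H^1)$ through a weighted radial Sobolev embedding on $\H^n$; a final Hölder in space distributes the leftover weight $\w^{2/2^*-2/q'}\big(\tfrac{\sinh^3 r}{\cosh r}\big)^{\alpha/2}$ onto the $u$-factors. The choice $\pi=2$ forces $p=2/(1-\alpha)$, and $n$-admissibility then determines $q$ (hence $q'$ and $\rho$); one checks $p\ge2$ and the positivity of all intermediate Hölder exponents (e.g. $\alpha<4/(n+2)$), which can be arranged by taking $\alpha>0$ small.

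The main obstacle is this last step: verifying that the leftover weight is admissible for the weighted radial Sobolev embedding $\big\|(\text{weight})^{1/(2\sigma-\alpha)}u\big\|_{L^{\theta(2\sigma-\alpha)}}\lesssim \|u\|_{H^1}$, with the required integrability exponent, uniformly in $r$. This is where both the radial symmetry and the exponential growth of the weights are essential, and where the hypothesis enters: the feasible range of $\alpha$ is nonempty precisely because $\sigma<2/(n-2)$, the borderline being the energy-critical exponent. One should also confirm that the growth of $\sinh^3 r/\cosh r$ at infinity is dominated by the strong weight $\w$ carried by the Strichartz and embedding norms, so that no divergence occurs as $r\to\infty$; this weight bookkeeping, rather than any single inequality, is the delicate part of the argument.
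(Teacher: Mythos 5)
Your reduction coincides with the paper's up to the decisive step: the pointwise bound $\lvert A(\lvert u\rvert^{2\si}u)\rvert\lesssim \lvert u\rvert^{2\si}\lvert Au\rvert$, the peeling of $f^{\alpha/2}$ into $L^{2/\alpha}_{t,x}$ (exactly the splitting \eqref{eq:split}), the time H\"older $1/p'=\alpha/2+1/2$, and the placement of $\w^{2/n}\lvert Au\rvert$ in the endpoint component $L^2\bigl(I;L^{2^*}(\w^{2^*-2}d\Omega)\bigr)$ of $X(I)$ are all as in the paper. But the argument then stops at what you yourself label ``the main obstacle'': the weighted radial Sobolev embedding you invoke to absorb the leftover weight $\w^{2/2^*-2/q'}\bigl(\sinh^3 r/\cosh r\bigr)^{\alpha/2}$ onto the $u$-factors is neither stated precisely nor verified. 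That verification is where the whole content of the lemma lies --- it is the only place the hypotheses $\si>0$ and $\si<2/(n-2)$ enter, and where the competition between the exponential decay of the leftover weight (note $2/2^*-2/q'=(2\alpha-4)/n<0$ for small $\alpha$) and the exponential volume growth $\sinh^{n-1}r\,dr$ is decided. Without this exponent bookkeeping you cannot conclude that a feasible $\alpha>0$ exists, so as written the proof is incomplete.

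The gap is avoidable, and the paper's device is lighter than the one you propose: it does not put any weight on the $u$-factors. The $2\si-\alpha$ copies of $u$ go into $L^\infty_t L^{2^*/(2\si-\alpha)}_x$ via the \emph{unweighted} Sobolev embedding $H^1(\H^n)\hookrightarrow L^{2^*}(\H^n)$, and the entire leftover weight is placed \emph{alone} in $L^\infty_t L^\theta_x$ with $1/\theta=2/n-\frac{n-2}{n}\si-\frac{2\alpha}{n}$ dictated by H\"older (see \eqref{eq:q}--\eqref{eq:theta}). Membership of this pure weight in $L^\theta(\H^n)$ is then a computation: integrability at infinity reduces to \eqref{eq:integrabilite}, namely $\frac{\alpha}{n-1}-\frac{\alpha}{2}-\frac{2\si-\alpha}{2^*}<0$, which at $\alpha=0$ reads $-2\si/2^*<0$ and holds as soon as $\si>0$ (no lower bound on $\si$ is needed --- that is precisely the hyperbolic phenomenon), while $\si<2/(n-2)$ gives $\theta<\infty$; the conditions being open, a small $\alpha>0$ survives by continuity. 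Your weighted-Sobolev route would require a Strauss-type radial estimate on $\H^n$ and might be made to work, but it demands strictly more than the pure-weight $L^\theta$ check, and since you leave it unproved, the lemma is not yet established by your argument.
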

\begin{proof}
First, note that we have the uniform point-wise estimate
\begin{equation*}
  \left\lvert A\(\lvert u\rvert^{2\si}u\)\right\rvert \lesssim
   \lvert u\rvert^{2\si}  \left\lvert
  Au\right\rvert . 
\end{equation*}
  We want to apply H\"older's inequality, after the following splitting: 
\begin{multline}
  \label{eq:split}
  {\tt w}_n^{1-2/q'} \lvert u\rvert^{2\si}  \left\lvert
  Au\right\rvert = \( \sqrt{\frac{\cosh
  r}{\sinh^3 r} } \lvert u\rvert\)^\alpha 
\times \lvert u\rvert^{2\si -\alpha} \\
\times {\tt w}_n^{2/n}\left\lvert
  Au\right\rvert\,\times
{\tt w}_n^{1-2/q'}
{\tt w}_n^{-2/n}\(\frac{\sinh^3 r}{\cosh
  r}\)^{\alpha/2}. 
\end{multline}
The first term will be estimated in $L^{2/\alpha}_{t,x}$, the second
in $L^{\infty}_t L^{2^*/(2\si-\alpha)}_x$, the third in
$L^{2}_t L^{2^*}_x$, and the last in $L^\infty_t L^\theta_x$, where
$2^* = \frac{2n}{n-2}$. Write
\begin{equation}\label{eq:holder}
  \frac{1}{q'}=
  \frac{\alpha}{2}+
  \frac{2\si-\alpha}{2^*}
+ \frac{1}{2^*}+ \frac{1}{\theta}\quad
  ;\quad \frac{1}{p'}=\frac{\alpha}{2}
+\frac{1}{2}. 
\end{equation}
Thanks to Sobolev embedding, the third term will be controlled by 
\begin{equation*}
  \left\lVert u\right\rVert_{L^\infty_tL^{2*}_x}^{2\si-\alpha}
  \lesssim \left\lVert u\right\rVert_{L^\infty_t
  H^1_x}^{2\si-\alpha}\lesssim \left\lVert
  u\right\rVert_{X(I)}^{2\si-\alpha}. 
\end{equation*}
Since the pair $(2,2^*)$ is
$n$-admissible (endpoint), the lemma will follow if we can choose
$(p,q)$ and $\alpha$ such that:
\begin{itemize}
\item $(p,q)$ is $n$-admissible.
\item $\alpha>0$, with $2\si-\alpha>0$. (Morally, $0<\alpha\ll 1$.)
\item The last factor in \eqref{eq:split} is in $L^\theta(\H^n)$,
  for some $\theta\in [1,\infty[$. 
\end{itemize}
If $p$ is imposed in view of \eqref{eq:holder}, that is
\begin{equation*}
  \frac{1}{p}= \frac{1}{2}-\frac{\alpha}{2},
\end{equation*}
then $(p,q)$ is $n$-admissible if
\begin{equation}
  \label{eq:q}
  \frac{1}{q} = \frac{1}{2^*}+\frac{\alpha}{n}. 
\end{equation}
This is consistent with the first equality of \eqref{eq:holder}
provided that
\begin{equation}\label{eq:theta}
  \frac{1}{\theta}= \frac{2}{n} -\frac{n-2}{n}\si -\frac{2\alpha}{n}. 
\end{equation}
Examine the last condition of the three listed above. Working
in radial coordinates, 
recall that the 
measure element is $\sinh^{n-1}r dr$. 
Integrability near $r=0$ is not a
problem. Integrability as $r\to \infty$
follows from an exponential decay, provided that: 
\begin{equation*}
 \theta\(\alpha + 
\frac{n-1}{2}\(1-\frac{2}{q'}-\frac{2}{n}
\)\)+n-1<0 .
\end{equation*}
Using \eqref{eq:q} and \eqref{eq:theta}, this becomes:
\begin{equation}\label{eq:integrabilite}
  \frac{\alpha}{n-1}-\frac{\alpha}{2} -\frac{2\si-\alpha}{2^*}<0.
\end{equation}
Consider the extreme case $\alpha=0$. The above condition is obviously
fulfilled, and $\theta$ is finite since $\si<\frac{2}{n-2}$, with
$\theta>1$ since $\si>0$. 
\smallbreak

Since the conditions  $\theta \in ]1,\infty[$ and
\eqref{eq:integrabilite} are open, by continuity, we can
find $\alpha>0$ such that they remain valid. 
So we have fulfilled all the  conditions listed above, and the lemma
follows from H\"older's inequality. 
\end{proof}
\begin{proof}[Proof of Theorem~\ref{theo:CA}]
Let $I$ be some time interval, and $t_0\in I$. For $(p,q)$ the
$n$-admissible pair of Lemma~\ref{lem:algebre}, weighted Strichartz
estimates \eqref{wStrichartzfree}--\eqref{wStrichartzinhom} yield
\begin{equation*}
  \left\lVert u\right\rVert_{X(I)} \le C\(\left\lVert
  u(t_0)\right\rVert_{H^1(\H^n)}  +\lVert f \rVert_{L^1\(I\times
  \H^n\)}^{\alpha/2}\left\lVert 
  u\right\rVert_{X(I)}^{2\si+1-\alpha}\). 
\end{equation*}
We have seen in the proof of Lemma~\ref{lem:algebre} that $\alpha>0$ is
such that $2\si>\alpha$. Therefore, the exponent
$2\si+1-\alpha$ is larger than one. 
Recall the standard bootstrap argument (see e.g. \cite{BG3}). 
\begin{lemma}[Bootstrap argument]\label{lem:boot}
Let $M=M(t)$ be a nonnegative continuous function on $[0,T]$ such
that, for every $t\in [0,T]$, 
\begin{equation*}
  M(t)\le \eps_1 + \eps_2 M(t)^\theta,
\end{equation*}
where $\eps_1,\eps_2>0$ and $\theta >1$ are constants such that
\begin{equation*}
  \eps_1 <\left(1-\frac{1}{\theta} \right)\frac{1}{(\theta \eps_2)^{1/(\theta
-1)}}\ ,\ \ \ M(0)\le  \frac{1}{(\theta \eps_2)^{1/(\theta
-1)}}.
\end{equation*}
Then, for every $t\in [0,T]$, we have
\begin{equation*}
  M(t)\le \frac{\theta}{\theta -1}\ \eps_1.
\end{equation*}
\end{lemma}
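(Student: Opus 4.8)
The plan is to turn the hypothesis into a sign analysis of the auxiliary function $g(x) = \eps_2 x^\theta - x + \eps_1$ on $[0,\infty)$, and then use the continuity of $t\mapsto M(t)$ to trap the trajectory. First I would observe that the assumption reads $g(M(t))\ge 0$ for every $t\in[0,T]$, so everything hinges on locating the set $\{x\ge 0 : g(x)\ge 0\}$. Differentiating, $g'(x)=\theta\eps_2 x^{\theta-1}-1$ vanishes at the single point $x_* = (\theta\eps_2)^{-1/(\theta-1)}$, which is the unique minimum of $g$; moreover $g(0)=\eps_1>0$ and $g(x)\to+\infty$ as $x\to\infty$. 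Using $\eps_2 x_*^{\theta-1}=1/\theta$ one computes $g(x_*)=\eps_1-(1-1/\theta)x_*$, so the first hypothesis $\eps_1<(1-1/\theta)x_*$ is precisely the statement $g(x_*)<0$. Hence $g$ has exactly two positive roots $x_-<x_*<x_+$, and $\{x\ge 0:g(x)\ge 0\}=[0,x_-]\cup[x_+,\infty)$, the two pieces separated by the gap $(x_-,x_+)$ on which $g<0$.

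Next I would invoke continuity. Since $g(M(0))\ge 0$ while the second hypothesis gives $M(0)\le x_*<x_+$, the value $M(0)$ cannot sit in $[x_+,\infty)$, so $M(0)\in[0,x_-]$. If $M(t_1)\ge x_+$ for some $t_1\in[0,T]$, then applying the intermediate value theorem to the continuous function $M$ between $0$ and $t_1$ would produce a time at which $M$ takes a value in the gap $(x_-,x_+)$, where $g<0$, contradicting $g(M)\ge 0$. Therefore $M(t)\le x_-$ for all $t\in[0,T]$: the trajectory remains trapped on the lower branch.

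Finally I would convert $M(t)\le x_-$ into the stated estimate. From $g(x_-)=0$ we have $x_-=\eps_1+\eps_2 x_-^\theta$, and since $x_-\le x_*$ we get $\eps_2 x_-^{\theta-1}\le \eps_2 x_*^{\theta-1}=1/\theta$, hence $\eps_2 x_-^\theta\le x_-/\theta$. Substituting yields $x_-(1-1/\theta)\le\eps_1$, that is $x_-\le \frac{\theta}{\theta-1}\eps_1$, which gives the conclusion. The only genuinely delicate point is the trapping step: one must use both the continuity of $M$ and, crucially, the \emph{strict} inequality $\eps_1<(1-1/\theta)x_*$, which guarantees $g(x_*)<0$ and therefore a nondegenerate gap $x_-<x_+$ separating the admissible region. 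Without strictness the two roots would coalesce and there would be no barrier preventing $M$ from escaping to the upper branch, so this is where I expect the main (though modest) obstacle to lie.
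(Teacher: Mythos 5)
Your proof is correct and complete: the sign analysis of $g(x)=\eps_2 x^\theta-x+\eps_1$, the identification of the strict hypothesis $\eps_1<(1-1/\theta)x_*$ with $g(x_*)<0$, the continuity/intermediate-value trapping on the lower branch $[0,x_-]$, and the final bound $x_-\le\frac{\theta}{\theta-1}\eps_1$ are all accurate, including your observation that strictness is what keeps the gap $(x_-,x_+)$ nondegenerate. The paper itself gives no proof of this lemma, simply citing it as the standard bootstrap argument, and your argument is precisely that standard one, so there is nothing further to compare.
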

Let $\eps>0$. Since $f\in L^1(\R\times\H^n)$, we can split $\R_+$ into
a \emph{finite} family 
\begin{equation*}
  \R_+ = \bigcup_{j=1}^J I_j,\quad I_j=[T_j,T_{j+1}[,\text{with }
  T_1=0\text{ and }
  T_{J+1}=+\infty,
\end{equation*}
so that $ \lVert f\rVert_{L^1(I_j\times\H^n)}\le \eps.$ 
Choosing $\eps>0$ sufficiently small and summing up
  over the $I_j$'s, we conclude:
  \begin{equation*}
   u\in X(\R). 
  \end{equation*}
Using weighted Strichartz inequality again, we see that
$\(e^{-it\Delta_{\H^n}} u(t,\cdot)\)_{t>0}$ is a 
Cauchy sequence in $H^1(\H^n)$ as $t\to +\infty$.
So, there is scattering at the $H^1$ level:
\begin{equation*}
  \exists u_+\in H^1\(\H^n\),\quad \left\lVert
  u(t)-e^{it\Delta_{\H^n}}u_+\right\rVert_{H^1(\H^n)} \Tend t {+\infty}0 .
\end{equation*}
This completes the proof of Theorem~\ref{theo:CA}. In view of
\cite{BCS}, 
Corollary~\ref{cor:scatt} follows. 
\end{proof}

\section{Asymptotic completeness in intermediary manifolds}
\label{sec:scattM}

The proof of Theorem~\ref{theo:scattM} follows the same strategy as
above. Introduce 
\begin{equation*}
 {\tt w}_n= {\tt w}_n(r)= \(\frac{\phi(r)}{r}\)^{\frac{n-1}{2}}, 
\end{equation*}
and denote by $d\Omega$ the measure on $M_k^n$. The following weighted
Strichartz estimates are established in \cite{BD}:
\begin{align*}
    \left\lVert e^{it\Delta_{M}}f(\cdot)
    \right\rVert_{L^p({\R},L^q({\tt w}_n^{q-2}d\Omega))}&\le C \|f\|_{L^2},\\
      \left\|\int_{I\cap\{s\le
      t\}} e^{i(t-s)\Delta_{M}}F(s)ds 
      \right\|_{L^{p}(I,L^{q}({\tt w}_n^{q-2}d\Omega))}&\le C\left\|
      F\right\|_{L^{r'}\(I,L^{s'}({\tt w}_n^{s'-2}d\Omega)\)},
    \end{align*}
for all radial functions $f\in L_{\rm rad}^2(M)$, $F\in
  L^{r'}\(I;L^{s'}_{\rm 
  rad}\(\H^n,{\tt w}_n^{s'-2}d\Omega\)\)$ and every $n$-admissible pairs $(p,q)$ and $(r,s)$.
If $A$ is a derivative in
  space of order one,  similar estimates hold with the
  operator $A$ in front of $f$ and of 
  the above retarded integral. The constants are
  independent of the   time interval $I$. Mimicking the proof of
  Theorem~\ref{theo:CA}, it suffices to prove the following
\begin{lemma}\label{lem:algebre2}
  Fix $n\ge 4$, $k\ge 1$ and $2/N<\si<\frac{2}{n-2}$, where
  $N=(2k+1)(n-1)+1$. Let $u$ be a radial solution
  to \eqref{eq:nlsM}, and $A\in \{{\rm Id},\d_r\}$. There exist an 
  $n$-admissible pair $(p,q)$, $0<\alpha<2\si$, 
  and $C>0$ such that for all time interval $I$, 
  \begin{equation}
    \label{eq:estimM}
    \left\lVert {\tt w}_n^{1-2/q'}A\(\lvert u\rvert^{2\si}u\)
\right\rVert_{L^{p'}(I;L^{q'})} \le C\lVert f \rVert_{L^1\(I\times
  \H^n\)}^{\alpha/2}\left\lVert 
  u\right\rVert_{X(I)}^{2\si+1-\alpha},
  \end{equation}
where $X(I)$ is defined as in \eqref{eq:X(I)}, and
\begin{equation*}
  f(t,r) =\frac{1}{r^3} \lvert u(t,r)\rvert^2. 
\end{equation*}
\end{lemma}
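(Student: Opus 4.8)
The plan is to follow the proof of Lemma~\ref{lem:algebre} essentially word for word, the only changes being that the weight $\frac{\cosh r}{\sinh^3 r}$ produced by the Morawetz estimate is replaced by $\frac{1}{r^3}$, and that the exponential size of $\H^n$ is replaced by the polynomial size of $M$. First I would record the pointwise bound $|A(|u|^{2\si}u)|\lesssim |u|^{2\si}|Au|$ and split ${\tt w}_n^{1-2/q'}|u|^{2\si}|Au|$ into four factors exactly as in \eqref{eq:split}: the first factor is now $f^{\alpha/2}=\(r^{-3/2}|u|\)^\alpha$, which produces the Morawetz quantity $\|f\|_{L^1}^{\alpha/2}$ once estimated in $L^{2/\alpha}_{t,x}$; the second is $|u|^{2\si-\alpha}$; the third is ${\tt w}_n^{2/n}|Au|$; and the fourth is the leftover weight ${\tt w}_n^{1-2/q'-2/n}r^{3\alpha/2}$. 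Since the H\"older bookkeeping \eqref{eq:holder}, the choice $1/p=1/2-\alpha/2$, the resulting $n$-admissibility relation \eqref{eq:q}, the expression \eqref{eq:theta} for $1/\theta$, and the Sobolev control of the third factor by $\|u\|_{X(I)}$ are all insensitive to the geometry, they transfer unchanged. Everything therefore reduces, as before, to exhibiting an $n$-admissible pair $(p,q)$ and an exponent $0<\alpha<2\si$ for which the fourth factor belongs to $L^\theta(M)$ for some $\theta\in\,]1,\infty[$.

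The entire difference with the hyperbolic setting is concentrated in this last integrability requirement. Working in radial coordinates, the measure is $\phi(r)^{n-1}dr$ and ${\tt w}_n=(\phi(r)/r)^{(n-1)/2}$. Near $r=0$ one has $\phi(r)\sim r$, hence ${\tt w}_n\sim1$, and the fourth factor vanishes like $r^{3\alpha/2}$, so integrability there is immediate. The novelty lies at $r\to\infty$: the exponential gain available on $\H^n$ is gone, and from $\phi(r)\sim r^{2k+1}/(2k+1)!$ one gets ${\tt w}_n\sim c\,r^{k(n-1)}$ and $\phi(r)^{n-1}\sim c\,r^{(2k+1)(n-1)}$, so the integrand is a pure power of $r$. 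Using the identity $1-2/q'-2/n=2(\alpha-2)/n$ already exploited in the proof of Lemma~\ref{lem:algebre}, the integrability of the fourth factor at infinity becomes an explicit inequality among exponents of $r$, affine in $\alpha$ and in $\si$.

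The hard part will be this inequality, since it is no longer satisfied for free. On $\H^n$ the extreme choice $\alpha=0$ worked trivially because of the exponential decay; here the coefficient of $\alpha$ carries the favorable sign, so the useful direction is instead $\alpha\to2\si$, and the condition genuinely bounds $\si$ from below. Substituting $\theta$ from \eqref{eq:theta} and clearing the positive denominator $2-(n-2)\si-2\alpha$, the requirement collapses to an affine inequality in $\alpha$; asking that it admit a solution in the open window $0<\alpha<2\si$ — which also keeps $\theta$ finite, thanks to $\si<2/(n-2)$ — singles out a lower threshold on $\si$ which works out to be exactly $\si>2/N$ with $N=(2k+1)(n-1)+1$. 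The emergence of the scattering dimension $N$ here is precisely the Euclidean-to-hyperbolic transition announced in the introduction: as $k\to\infty$ the threshold $2/N$ tends to $0$, formally recovering the hyperbolic statement of Lemma~\ref{lem:algebre}.

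Once such an $\alpha$ is fixed strictly inside $(0,2\si)$ — the integrability condition and the constraint $\theta\in\,]1,\infty[$ being open, a valid interior value can be selected as in Lemma~\ref{lem:algebre} — H\"older's inequality closes the estimate \eqref{eq:estimM}. With the lemma in hand, Theorem~\ref{theo:scattM} follows by repeating the proof of Theorem~\ref{theo:CA} verbatim: the bootstrap of Lemma~\ref{lem:boot}, fed by the weighted Strichartz estimates on $M$ recalled at the beginning of this section and by the Morawetz bound \eqref{eq:MorawetzM} in place of \eqref{globalM}, gives $u\in X(\R)$ and hence scattering in $H^1_{\rm rad}(M)$.
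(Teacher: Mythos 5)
Your framework is fine as far as it goes --- the four-factor splitting with $(r^{-3/2}|u|)^\alpha$ estimated in $L^{2/\alpha}_{t,x}$, the asymptotics ${\tt w}_n\sim c\,r^{k(n-1)}$ and $\phi^{n-1}\sim c\,r^{N-1}$, and the identity $1-2/q'-2/n=2(\alpha-2)/n$ all match the paper --- but the final optimization fails, and it fails exactly where the new difficulty of this lemma sits. By transferring \eqref{eq:holder}, and hence \eqref{eq:theta}, ``unchanged'', you freeze the middle factor $|u|^{2\si-\alpha}$ at the endpoint Sobolev exponent, i.e.\ in $L^\infty_t L^{2^*/(2\si-\alpha)}_x$. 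With \eqref{eq:theta}, $\frac{1}{\theta}=\frac{2}{n}-\frac{(n-2)\si}{n}-\frac{2\alpha}{n}$, and the integrability requirement at infinity, which as in the paper reads
\begin{equation*}
\frac{1}{\theta}<\frac{2}{n}-\frac{2}{N}-\alpha\Big(\frac{1}{2N}+\frac{1}{n}\Big),
\end{equation*}
reduces to
\begin{equation*}
\frac{2}{N}-\frac{(n-2)\si}{n}<\alpha\Big(\frac{1}{n}-\frac{1}{2N}\Big).
\end{equation*}
You are right that the coefficient of $\alpha$ is favorable, so the best case in your scheme is $\alpha\uparrow 2\si$; but substituting $\alpha=2\si$ gives $\frac{2}{N}+\frac{\si}{N}<\si$, i.e.\ $\si>\frac{2}{N-1}$, \emph{not} $\si>2/N$ as you assert. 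Hence for $\si\in\,]2/N,2/(N-1)]$ no admissible $\alpha\in\,]0,2\si[$ exists in your bookkeeping, and the lemma is not proved on its full stated range. (For larger $\si$ one must also respect $1/\theta>0$, i.e.\ $\alpha<1-(n-2)\si/2$, but the failure near the threshold is the one displayed above.)

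The paper's proof closes this gap by introducing one more free parameter, which your verbatim transfer erases: the H\"older splitting \eqref{eq:holderM} puts $\frac{2\si-\alpha}{a}$ in place of $\frac{2\si-\alpha}{2^*}$, the middle factor being estimated in $L^\infty_t L^{a/(2\si-\alpha)}_x$ via the embedding $H^1(M)\hookrightarrow L^a(M)$ for an arbitrary $a\in[2,2^*]$; and the extreme case examined is $\alpha=0$, exactly as on $\H^n$, not $\alpha\to 2\si$. At $\alpha=0$ the two constraints become $a/N<\si$ (integrability of the leftover weight at infinity) and $\si<a/n$ ($\theta$ finite); since $N>n$ for $k\ge 1$, a value $a\in\,]n\si,N\si[\,\cap\,[2,2^*]$ exists precisely when $2/N<\si<2/(n-2)$, and the openness argument then yields the required $0<\alpha\ll 1$. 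One can check that the $\alpha$-direction cannot rescue your version: even with $a$ left free, letting $\alpha\uparrow 2\si$ makes $a$ drop out of the constraint and one again obtains only $\si>2/(N-1)$; the sharp threshold $2/N$ is reached only at the corner $\alpha=0$ with $a$ close to $2$, where your argument never goes. So the missing idea is the flexible Sobolev exponent $a$ --- using the whole family $H^1(M)\hookrightarrow L^a(M)$, $2\le a\le 2^*$, rather than only the endpoint $a=2^*$.
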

\begin{proof}
  The proof is very similar to the proof of Lemma~\ref{lem:algebre}.
  We want to apply H\"older's inequality, after the following splitting:
\begin{equation*}
  \begin{aligned}
  {\tt w}_n^{1-2/q'} \lvert u\rvert^{2\si}  \left\lvert
  Au\right\rvert = &\( \frac{1}{r^{3/2}} \lvert u\rvert\)^\alpha 
\times\lvert u\rvert^{2\si -\alpha}
\times {\tt w}_n^{2/n}\left\lvert
  Au\right\rvert\times
{\tt w}_n^{1-2/q'}
{\tt w}_n^{-2/n} r^{3\alpha/2}. 
\end{aligned}
\end{equation*}
Write
\begin{equation}\label{eq:holderM}
  \frac{1}{q'}=
  \frac{\alpha}{2}+
  \frac{2\si-\alpha}{a}
+ \frac{1}{2^*}+ \frac{1}{\theta}\quad
  ;\quad \frac{1}{p'}=\frac{\alpha}{2}
+\frac{1}{2}, 
\end{equation}
where $2^* = \frac{2n}{n-2}$. The lemma will follow if we can choose
$(p,q)$, $a$, and $\alpha$, such that:
\begin{itemize}
\item $(p,q)$ is $n$-admissible.
\item $\alpha>0$, with $2\si-\alpha>0$. 
\item $a\in [2,2^*]$ (to control the second term thanks to
  Sobolev embedding). 
\item The last factor in the above splitting is in $L^\theta(M_k^n)$,
  for some $\theta\in [1,\infty[$. 
\end{itemize}
With $p$ given by the second equation in \eqref{eq:holderM}, the first
condition is equivalent to 
\begin{equation}
  \label{eq:qM}
  \frac{1}{q} = \frac{1}{2^*}+\frac{\alpha}{n}. 
\end{equation}
This is consistent with the first equality of \eqref{eq:holderM}
provided that
\begin{equation}\label{eq:thetaM}
  \frac{1}{\theta}= \frac{2}{n} -\frac{2\si}{a}
  -\alpha\(\frac{1}{a}+\frac{1}{n}+\frac{1}{2}\). 
\end{equation}
Let us examine the last condition of the four listed above. Working
in radial coordinates, 
recall that the 
measure element is $\phi(r)^{n-1} dr$. 
Integrability near $r=0$ is not a
problem. Integrability as $r\to \infty$ holds if: 
\begin{equation*}
 \theta\(\frac{3\alpha}{2} + 
k(n-1)\(1-\frac{2}{q'}-\frac{2}{n}
\)\)+(2k+1)(n-1)<-1,
\end{equation*}
that is, thanks to \eqref{eq:qM},
\begin{equation*}
  \theta\( \frac{3\alpha}{2} +  \frac{N-n}{n}\alpha
  -\frac{2(N-n)}{n}\)+N<0\Longleftrightarrow \frac{1}{\theta} <
  \frac{2}{n}-\frac{2}{N} -\alpha\(\frac{1}{2N}+\frac{1}{n}\). 
\end{equation*}
Consider the extreme case $\alpha=0$.
The above condition on $\theta$ yields
\begin{equation*}
  \frac{2}{N}<\frac{2\si}{a} \Longleftrightarrow \frac{a}{N}<\si.
\end{equation*}
In view of \eqref{eq:thetaM}, $\theta$ is finite if
\begin{equation*}
  \si<\frac{a}{n}. 
\end{equation*}
Moreover $\theta$ is always larger than $1$ ($n\ge 4$ and $\si>0$). 
Since $k\ge 1$, $N>n$, and since $\frac{2}{N}<\si<\frac{2}{n-2}$, we
can find $a\in [2,2^*]$ such that 
\begin{equation*}
  \frac{a}{N}<\si<\frac{a}{n}. 
\end{equation*}
Fix the parameter $a$. Since the requirements we have made are open
conditions, by continuity, we can find $0<\alpha\ll 1$ such that they
are still satisfied.
So we have fulfilled all the  conditions listed above, and the lemma
follows from H\"older's inequality. 
\end{proof}

\section{The free dynamics in the radial case}
\label{sec:free}

To prove Proposition~\ref{prop:free}, we first reduce the analysis to
the Euclidean case, as in \cite{VittoriaDR,BD}. Consider the equation
\begin{equation} \label{eq:17h43}
  i\d_t u+\Delta u =0 ,
\end{equation}
where $\Delta$ stands for the Laplace--Beltrami associated to an
$n$-dimensional rotationally symmetric manifold with metric
\begin{equation*}
  ds^2 = dr^2 + \phi(r)^2d\omega^2,\text{ where }\phi(r) = \sum_{j=0}^k
  \frac{1}{(2j+1)!} r^{2j+1},
\end{equation*}
and $k$ is possibly infinite. 
Introduce $\widetilde
u$, given by
\begin{equation*}
  \widetilde u (t,r) = u(t,r) \(\frac{\phi(r)}{r}\)^{\frac{n-1}{2}}. 
\end{equation*}
In the case of radial solutions, \eqref{eq:17h43} is equivalent to
\begin{align*}
  &i\d_t \widetilde u + \Delta_{\R^n} \widetilde u = V \widetilde
  u,\\
\text{where }&V(r)=
  \frac{n-1}{2}\frac{\phi''(r)}{\phi(r)}+\frac{(n-1)(n-3)}{4}
  \(\(\frac{\phi'(r)}{\phi(r)}\)^2 -\frac{1}{r^2}\). 
\end{align*}
We check easily the following dichotomy:
\begin{itemize}
\item If $k$ is finite, then $V$ is smooth, with $V(r)=\O(r^{-2})$ as
  $r\to \infty$. 
\item If $k=\infty$ (case of hyperbolic space), then $\phi''=\phi$,
  and $V= (n-1)/2 +\widetilde V$, where $\widetilde V$ is as above. 
\end{itemize}
Up
  to replacing $\widetilde u$ with $e^{i(n-1)t/2}\widetilde u$ when
  $k$ is infinite, we see that it suffices to study the first
  case. The potential $V$ is a short range potential, as far as linear
  scattering theory is concerned (see
  e.g. \cite{DG,Yafaev}). Therefore, there exists $\widetilde u_+\in
  L^2_{\rm rad}(\R^n)$ such that
  \begin{equation*}
    \left\lVert \widetilde u(t) - e^{it\Delta_{\R^n}}\widetilde
    u_+\right\rVert_{L^2(\R^n)} \Tend t{+\infty} 0. 
  \end{equation*}
Moreover, the map $\widetilde u_{\mid t=0}\mapsto \widetilde u_+$ is
linear and continuous from $L^2_{\rm rad}(\R^n)$ to $L^2_{\rm
  rad}(\R^n)$. Recalling that the volume element is $r^{n-1}dr$ on
$\R^n$, and $\phi(r)^{n-1}dr$ on the manifold that we consider, we infer
\begin{equation*}
  \left\lVert u(t) - v_1(t)\right\rVert_{L^2(M)} \Tend t{+\infty} 0,
\end{equation*}
where
\begin{equation*}
  v_1(t,r)= \(\frac{r}{\phi(r)}\)^{\frac{n-1}{2}}e^{it\Delta_{\R^n}}\(
   \widetilde u_+(r)\). 
\end{equation*}
Proposition~\ref{prop:free} then follows from  the standard large time
asymptotics for $e^{it\Delta_{\R^n}}$,
\begin{equation}\label{eq:DAeucl}
  \left\lVert e^{it\Delta_{\R^n}} \varphi - \Lambda
  (t)\right\rVert_{L^2(\R^n)} \Tend t {+\infty}0,
\text{ where } \Lambda(t,x) = \frac{e^{i\lvert
  x\rvert^2/(4t)}}{t^{n/2}} \(\F \varphi\)\(\frac{x}{2t}\), 
\end{equation}
and the Fourier transform $\F$ is normalized so the above relation
holds true. This asymptotics is, for instance,  a straightforward
consequence of 
the factorization  $e^{it\Delta_{\R^n}}=MD\F M$, where $M$ is the
multiplication by an exponential, and $D$ is the $L^2$-unitary
dilation at scale $t$.  
\begin{remark}\label{rem:Fourier}
  We notice that $V=0$ if $\phi'$ is constant: in the Euclidean case,
  $\widetilde u = c u$. If $n=3$ and $\phi''=c\phi$, $V$ is constant:
  for radial solutions on $\H^3$, and up to a purely time dependent
  phase shift, there is no external potential. In the two cases
  distinguished here, we have $\widetilde u_+=\widetilde u_{\mid
  t=0}$. Then \eqref{eq:DAeucl} shows why the Fourier transform is
  present in the description of the asymptotic behavior of radial
  solutions to \eqref{eq:17h43}. Recall that the Fourier transform for
  radially symmetric functions on $\H^n$
  is much simpler when $n=3$; see \cite{BaHyper} and references
  therein. 
\end{remark}
\begin{remark}\label{rem:long}
  Following the formal argument given in \cite{Ginibre},
  Proposition~\ref{prop:free} suggests that for $\si\le 1/N$, long
  range effects are present in \eqref{eq:nlsM}. Suppose that $n\ge 2$
  and 
  $0<\si\le 1/N$, where $N=(2k+1)(n-1)+1$. Let $u\in
  C([T,\infty[;L^2_{\rm rad}(M_k^n))$ be a solution of \eqref{eq:nlsM}
  such that there exists $u_+\in L^2_{\rm rad}(M_k^n)$ with
  \begin{equation*}
    \left\lVert u(t)- e^{it\Delta_M}u_+\right\rVert_{L^2}\Tend t
    {+\infty}0.
  \end{equation*}
Formal computations indicate that necessarily, $u_+\equiv 0$ and
$u\equiv 0$: the linear and nonlinear dynamics are no longer
comparable, due to long range effects. To see this, let $\psi
\in C_0^\infty(M)$ be radial, and $t_2\ge t_1\ge T$. By assumption, 
  \begin{equation*}
    \< \psi, e^{-it_2\Delta_M}u(t_2)-e^{-it_1\Delta_M}u(t_1)\>=
    -i\int_{t_1}^{t_2}\< 
    e^{it\Delta_M}\psi, \(|u|^{2\si}u\)(t)\>dt
  \end{equation*}
goes to zero as $t_1,t_2\to +\infty$. Proposition~\ref{prop:free}
implies that for $t\to +\infty$, we have
\begin{equation*}
  \< 
    e^{it\Delta_M}\psi, \(|u|^{2\si}u\)(t)\> \approx
    \frac{1}{t^{n\si+n}} \int_0^\infty\(\frac{r}{\phi(r)}\)^{(n-1)(\si+1)}
    \varphi\(\frac{r}{t}\)\phi(r)^{n-1}dr, 
\end{equation*}
for $\varphi = \L \psi \lvert \L u_+\rvert^{2\si}\overline{\L
  u_+}$. With the change of variable $r\mapsto t r$, the above
  integral is equal to 
  \begin{equation*}
    \frac{1}{t^{n\si+n-1}} \int_0^\infty\(\frac{t r }{\phi(t
    r)}\)^{(n-1)(\si+1)} 
    \varphi\(r\)\phi(t r)^{n-1}dr. 
  \end{equation*}
For $r\ge 1$ and large $t$, the function at stake behaves like
\begin{equation*}
  \frac{1}{t^{n\si+n-1}} \(\frac{t r }{(t
    r)^{2k+1}}\)^{(n-1)(\si+1)}\varphi(r)\(tr\)^{(n-1)(2k+1)} =
    \frac{r^{-(N-n)\si+n-1}}{t^{N\si}}\varphi(r). 
\end{equation*}
This function of $t$ is not integrable, unless $\varphi\equiv 0$.
This means that
$\L u_+= 0=u_+$ ($\operatorname{Ker}\L =\{0\}$). The assumption and the
conservation of mass then imply $u\equiv 0$.
\end{remark}
\bibliographystyle{amsplain}
\bibliography{hyper}

\providecommand{\bysame}{\leavevmode\hbox to3em{\hrulefill}\thinspace}
\providecommand{\MR}{\relax\ifhmode\unskip\space\fi MR }
\providecommand{\MRhref}[2]{%
  \href{http://www.ams.org/mathscinet-getitem?mr=#1}{#2}
}
\providecommand{\href}[2]{#2}
\begin{thebibliography}{10}

\bibitem{AnPi}
J.-P. Anker and V.~Pierfelice, in preparation.

\bibitem{BG3}
H.~Bahouri and P.~G{\'e}rard, \emph{High frequency approximation of solutions
  to critical nonlinear wave equations}, Amer. J. Math. \textbf{121} (1999),
  no.~1, 131--175.

\bibitem{BGX}
H.~Bahouri, P.~G{\'e}rard, and C.-J. Xu, \emph{Espaces de {B}esov et
  estimations de {S}trichartz g\'en\'eralis\'ees sur le groupe de
  {H}eisenberg}, J. Anal. Math. \textbf{82} (2000), 93--118.

\bibitem{BaHyper}
V.~Banica, \emph{The nonlinear {S}chr\"odinger equation on the hyperbolic
  space}, Comm. Partial Differential Equations \textbf{32} (2007), no.~10,
  1643--1677.

\bibitem{BCS}
V.~Banica, R.~Carles, and G.~Staffilani, \emph{Scattering theory for radial
  nonlinear {S}chr\"odinger equations on hyperbolic space}, Geom. Funct. Anal.
  (2008), to appear.

\bibitem{BD}
V.~Banica and T.~Duyckaerts, \emph{Weighted {S}trichartz estimates for radial
  {S}chr\"odinger equation on noncompact manifolds}, archived as {\tt
  arXiv:0707.3370}, 2007.

\bibitem{Barab}
J.~E. Barab, \emph{Nonexistence of asymptotically free solutions for nonlinear
  {S}chr\"odinger equation}, J. Math. Phys. \textbf{25} (1984), 3270--3273.

\bibitem{BCHM}
J.-F. Bony, R.~Carles, D.~H\"afner, and L.~Michel, \emph{Scattering theory for
  the {S}chr\"odinger equation with repulsive potential}, J. Math. Pures Appl.
  \textbf{84} (2005), no.~5, 509--579.

\bibitem{CaSIMA}
R.~Carles, \emph{Nonlinear {S}chr\"odinger equations with repulsive harmonic
  potential and applications}, SIAM J. Math. Anal. \textbf{35} (2003), no.~4,
  823--843.

\bibitem{CazCourant}
T.~Cazenave, \emph{Semilinear {S}chr\"odinger equations}, Courant Lecture Notes
  in Mathematics, vol.~10, New York University Courant Institute of
  Mathematical Sciences, New York, 2003.

\bibitem{CKSTTCPAM}
J.~Colliander, M.~Keel, G.~Staffilani, H.~Takaoka, and T.~Tao, \emph{Global
  existence and scattering for rough solutions of a nonlinear {S}chr\"odinger
  equation on {$\mathbb R\sp 3$}}, Comm. Pure Appl. Math. \textbf{57} (2004),
  no.~8, 987--1014.

\bibitem{CKSTTAnnals}
\bysame, \emph{Global well-posedness and scattering for the energy--critical
  nonlinear {S}chr\"odinger equation in {$\mathbb R\sp 3$}}, Ann. of Math. (2)
  (2008), to appear.

\bibitem{DG}
J.~Derezi\'nski and C.~G\'erard, \emph{Scattering theory of quantum and
  classical {N}-particle systems}, Texts and Monographs in Physics, Springer
  Verlag, Berlin Heidelberg, 1997.

\bibitem{Dunford}
N.~Dunford and J.~T. Schwartz, \emph{Linear operators. {P}art {II}: {S}pectral
  theory. {S}elf adjoint operators in {H}ilbert space}, With the assistance of
  William G. Bade and Robert G. Bartle, Interscience Publishers John Wiley \&
  Sons\ New York-London, 1963.

\bibitem{Ginibre}
J.~Ginibre, \emph{An introduction to nonlinear {S}chr\"odinger equations},
  Nonlinear waves (Sapporo, 1995) (R.~Agemi, Y.~Giga, and T.~Ozawa, eds.),
  GAKUTO International Series, Math. Sciences and Appl., Gakk\={o}tosho, Tokyo,
  1997, pp.~85--133.

\bibitem{GV79Cauchy}
J.~Ginibre and G.~Velo, \emph{On a class of nonlinear {S}chr\"odinger
  equations. {I} {T}he {C}auchy problem, general case}, J. Funct. Anal.
  \textbf{32} (1979), 1--32.

\bibitem{GV85}
\bysame, \emph{Scattering theory in the energy space for a class of nonlinear
  {S}chr\"odinger equations}, J. Math. Pures Appl. (9) \textbf{64} (1985),
  no.~4, 363--401.

\bibitem{HTW05}
A.~Hassell, T.~Tao, and J.~Wunsch, \emph{A {S}trichartz inequality for the
  {S}chr\"odinger equation on nontrapping asymptotically conic manifolds},
  Comm. Partial Differential Equations \textbf{30} (2005), no.~1-3, 157--205.

\bibitem{IS08}
A.~Ionescu and G.~Staffilani, \emph{Semilinear {S}chr\"odinger flows on
  hyperbolic spaces: scattering in ${H}^1$}, archived as {\tt arXiv:0801.2957},
  2008.

\bibitem{Pe}
P.~Petersen, \emph{Riemannian geometry}, Graduate Texts in Math., vol. 171,
  Springer, 1997.

\bibitem{VittoriaDR}
V.~Pierfelice, \emph{Weighted {S}trichartz estimates for the {S}chr\"odinger
  and wave equations on {D}amek-{R}icci spaces}, Math. Z. (2008), to appear.

\bibitem{RV}
E.~Ryckman and M.~Visan, \emph{Global well-posedness and scattering for the
  defocusing energy--critical nonlinear {S}chr\"odinger equation in {$\mathbb
  R\sp{1+4}$}}, Amer. J. Math. \textbf{129} (2007), no.~1, 1--60.

\bibitem{Strauss74}
W.~A. Strauss, \emph{Nonlinear scattering theory}, Scattering theory in
  mathematical physics (J.~Lavita and J.~P. Marchand, eds.), Reidel, 1974.

\bibitem{TaoVisanZhang}
T.~Tao, M.~Visan, and X.~Zhang, \emph{The nonlinear {S}chr\"odinger equation
  with combined power-type nonlinearities}, Comm. in Partial Diff. Eq.
  \textbf{32} (2007), 1281--1343.

\bibitem{VisanH1}
M.~Visan, \emph{The defocusing energy-critical nonlinear {S}chr\"odinger
  equation in higher dimensions}, Duke Math. J. \textbf{138} (2007), no.~2,
  281--374.

\bibitem{Yafaev}
D.~Yafaev, \emph{Scattering theory: some old and new problems}, Lecture Notes
  in Mathematics, vol. 1735, Springer-Verlag, Berlin, 2000.

\end{thebibliography}

\end{document}